\newcommand{\diam}{\mathrm{diam}}
\newcommand{\lip}{\mathrm{Lip}^1}
\newcommand{\id}{\mathrm{id}}
\newcommand{\cs}{\mathrm{C}^*}
\newcommand{\aff}{\mathrm{Aff}}
\newcommand{\rr}{\mathbb{R}}
\newenvironment{items}{\begin{list} {$\cdot$} {\setlength{\leftmargin}{0.5cm}}}{\end{list}}
\newtheoremstyle{smallcaps}
    {3pt}                    
    {3pt}                    
    {\itshape}                   
    {}                           
    {\sc}                   
    {.}                          
    {.5em}                       
    {}  
\newtheoremstyle{smallcapsdef}
    {3pt}                    
    {3pt}                    
    {}                   
    {}                           
    {\sc}                   
    {.}                          
    {.5em}                       
    {}  
\theoremstyle{plain}
\newtheorem {theorem}{Theorem}[section]
\newtheorem {lemma}[theorem]{Lemma}
\newtheorem {proposition}[theorem]{Proposition}
\newtheorem {corollary}[theorem]{Corollary}
\newtheorem {claim}[theorem]{Claim}
\theoremstyle {definition}
\newtheorem {definition}[theorem]{Definition}
\newtheorem {remark}[theorem]{Remark}
\numberwithin{equation}{section}
\begin{document}

\author{Bhishan Jacelon, Karen R.~Strung, Alessandro Vignati}
\date{\today}
\title{Optimal transport and unitary orbits in $\cs$-algebras}

\address{Department of Mathematics, Physics and Geology\\Cape Breton University\\1250 Grand Lake Road\\Sydney, Nova Scotia B1P 6L2\\Canada} 
\email{bjacelon@gmail.com}
\address{Institute of Mathematics\\Czech Academy of Sciences\\ \v{Z}itn\'{a} 25\\115 67 Praha 1\\Czechia}
\email{strung@math.cas.cz}
\address{Institut de Math\'ematiques de Jussieu - Paris Rive Gauche (IMJ-PRG)\\
Universit\'e de Paris\\
B\^atiment Sophie Germain\\
8 Place Aur\'elie Nemours\\ Paris, 75013, France}
\email{ale.vignati@gmail.com}
\keywords{	$\mathcal{Z}$-stable $\mathrm{C}^*$-algebras, Unitary orbits, Optimal transport}
\subjclass[2010]{46L05, 46L35, 49Q20}




\maketitle

\begin{abstract}
Two areas of mathematics which have received substantial attention in recent years are the theory of optimal transport and the Elliott classification programme for $\cs$-algebras.  We combine these two seemingly unrelated disciplines to make progress on a classical problem of Weyl. In particular, we show how results from the Elliott classification programme can be used to translate continuous transport of spectral measures into optimal unitary conjugation in $\cs$-algebras. As a consequence, whenever two normal elements of a sufficiently well-behaved \mbox{$\cs$-algebra} share a spectrum amenable to such continuous transport, and have trivial $K_1$-class, the distance between their unitary orbits can be computed tracially.
\end{abstract}

\section{Introduction}

The problem of calculating the distance between the orbits of operators under conjugation by unitaries was first considered by Weyl \cite{Weyl:1912aa}. It is well known to any student of linear algebra that two normal matrices are in the same unitary orbit if and only if they have the same eigenvalues, including multiplicity. That is to say, a given unitary orbit is completely determined by spectral data.  How then to compare two normal matrices that do not lie in the same orbit? Weyl showed that for self-adjoint matrices, spectral data not only determines a given unitary orbit, but is in fact enough to calculate the distance between two orbits via an optimal matching of eigenvalues. This problem was later considered for unitary matrices \cite{Bhatia:1984aa}, in the setting of von Neumann algebras \cite{Hiai:1989aa, Sherman:2007aa}, purely infinite $\cs$-algebras \cite{Skoufranis:2013aa}, as well as for positive elements in $\cs$-algebras by Toms and the first two authors \cite{Jacelon:2014aa}.   In this article we continue the study of the Weyl problem in the setting of $\cs$-algebras. In particular, we address the question: for which pairs of normal elements in a given well-behaved \mbox{$\cs$-algebra} can one exactly compute the distance between their unitary orbits using only traces? Our approach connects the theory of optimal transport to the theory of $\cs$-algebras via the Elliott classification programme. 

The Elliott classification programme aims to show how two \mbox{$\cs$-algebras} in a given class can be determined, up to isomorphism, by a computable invariant, called the Elliott invariant. Explicitly, the Elliott invariant consists of $K$-theory, traces, and a pairing between these objects.  By now, the classification programme has been proved to be highly successful: all simple separable unital nuclear $\cs$-algebras can be classified by the Elliott invariant under the minor restrictions of $\mathcal{Z}$-stability and provided the so-called universal coefficient theorem (UCT) holds \cite{EllGonLinNiu:ClaFinDecRan, GongLinNiue:ZClass1,  GongLinNiue:ZClass2,  TikWhiWin:QD}. Here, $\mathcal{Z}$-stability refers to tensorial absorption of the Jiang--Su algebra $\mathcal{Z}$. Constructed in \cite{Jiang:1999hb}, $\mathcal{Z}$ plays an essential role in the classification programme as $\mathcal{Z}$-stability guarantees good structural behaviour, see for example the preliminary discussion in \cite{BBSTWW:2Col}.  The UCT \cite{RosSho:UCT} is a relatively mild restriction: although it remains an open question as to whether it holds for all nuclear $\cs$-algebras, it does in fact hold for all \emph{known} nuclear $\cs$-algebras; see \cite{Win:QDQ} for an overview of the role of the UCT in classification.

The aim of this paper, with these powerful results of the Elliott programme in place, is to use the machinery to tackle the classical problem of Weyl.  With a rich selection of models witnessing a given invariant, classification gives us flexibility to choose our favourite representative $\cs$-algebra or $^*$-homomorphism. This was the technique of \cite{Jacelon:2014aa} and is also our strategy in the sequel.

In \cite{Jacelon:2014aa} the Weyl problem for self-adjoint elements was approached globally, by finding a concrete tracial model for the ambient $\cs$-algebra and applying results of the Elliott classification programme.  The Elliott passport allowed us to freely travel from $\cs$-algebras to matrices and settle the question for self-adjoint elements with connected spectra. 

In this paper, our fine-tuning is local: we solve a transportation problem on certain spectra and use our Elliott phrasebook to translate this back to unitary orbits. Optimal transport theory has its beginnings in the 18th century when Monge \cite{Monge} studied how to move a mass from one location to another in a way which is optimal with respect to cost. The problem was later put on solid mathematical grounding and greatly advanced by Kantorovich \cite{Kantorovich:1942aa, Kantorovich2004}, where the problem was rephrased in measure-theoretical terms. Despite having been introduced over two centuries ago, optimal transport continues to find new applications to many areas of mathematics including probability, Riemannian geometry, partial differential equations, and much more, as can be seen for example in \cite{Villani:2009aa, Figalli2011, Villani:2003}.

Here, we study a continuous version of the classical Monge--Kantorovich problem on a compact metric space \cite{Kantorovich:1942aa}, analogous for example to the $L^\infty$ problem considered in \cite{Champion:2008aa}. This concerns the transport of one probability measure onto another, optimal with respect to a given cost function. We show that the quantised version of this problem can be solved for lines, circles, and cubes among other spaces. Since lines and circles are spectra of self-adjoint and unitary elements respectively, this allows us to apply our results to the unitary orbits of such elements. In this context we are primarily interested in compact subsets of the plane, but our analysis is more broadly applicable.  Indeed, alongside the Weyl problem, we consider more generally the minimal distance between unitary conjugates of $^*$-homomorphisms from commutative unital $\cs$-algebras. 

For us, a `well-behaved' $\cs$-algebra will at least be infinite-dimensional, simple, separable, unital, exact,  have stable rank one and strict comparison of positive elements. We also, for the most part, assume real rank zero. (See for example \cite[Chapter \rm{III}]{Blackadar:1998qf} for an explanation of the significance of these assumptions: essentially they ensure that maximal information can be extracted from $K$-theory and traces.) Moving beyond the real rank zero setting would require a more subtle analysis involving determinants of unitaries, but following \cite{Matui:2011uq} we make some progress by assuming $\mathcal{Z}$-stability. Note however that we do not assume nuclearity, so our theorems apply to some algebras not under the umbrella of the Elliott programme (see Remark~\ref{nonnuclear}).

The paper is structured as follows. In \S\ref{measures} we define our transportation property, and show that many well-behaved manifolds satisfy this property.   In \S\ref{distance} we consider Weyl's problem more generally, studying the distance between unitary conjugates of $^*$-homomorphisms from commutative unital $\cs$-algebras. Finally, \S\ref{mainsec} is our bridging of worlds: we show how to use classification to convert transport maps into conjugating unitaries.

\subsection*{Acknowledgements} BJ was partially supported by NSERC of Canada while at the University of Toronto. KRS was partially supported by Sonata 9 NCN grant 2015/17/D/ST1/02529 and a Radboud Excellence Initiative fellowship and is currently funded by currently funded by GA\v{C}R project 20-17488Y and RVO: 67985840.  AV is partially supported by a Prestige co-fund programme and a FWO scholarship. KRS benefited from several visits to the Fields Institute, and the authors would also like to thank l'Institute Math\'ematiques de Jussieu-Paris Rive Gauche for hosting BJ and KRS in June 2018. The authors are  indebted to George Elliott, Cy Maor, Robert McCann and Stuart White for a number of helpful discussions.

\section{Transporting measures} \label{measures}

Let $X$ be a compact connected metric space and endow the space $\mathcal{M}(X)$ of Borel probability measures on $X$ with the usual weak-$^*$ topology. There are many metrisations of $\mathcal{M}(X)$ that appear in the context of optimal transport problems (see \cite{Gibbs:2002aa}), chief among them being the Wasserstein metrics $W_p$ \cite[p.424]{Gibbs:2002aa}. Most relevant for us is a variation of the L\'{e}vy--Prokhorov metric that we will call the \emph{optimal matching distance}:
\begin{equation} 
\delta(\mu,\nu) = \inf\{r\mid\forall U\subseteq X\text{ Borel } (\mu(U) \le \nu(U_r), \: \nu(U) \le \mu(U_r))\},
\end{equation}
where $U_r=\{x\in X \mid d(x,U)<r\}$. The terminology comes from finitely supported measures: if $\mu=\frac{1}{n}\sum_{i=1}^n\delta_{x_i}$ and $\nu=\frac{1}{n}\sum_{i=1}^n\delta_{y_i}$, then (by Hall's marriage theorem)
\begin{equation} \label{delta}
\delta(\mu,\nu) = \min_{\sigma\in S_n} \max_{1\le i \le n} d(x_i, y_{\sigma(i)}).
\end{equation}
In this context the optimal matching distance between $\mu$ and $\nu$ is sometimes also referred to as the \emph{bottleneck distance} between the finite sets $F=\{x_1,\ldots,x_n\}$ and $G=\{y_1,\ldots,y_n\}$, denoted $\mathfrak{b}(F,G)$. Two facts should be remarked:
\begin{itemize}
\item The function $\delta$ is indeed a metric;
\item Although in the literature the bottleneck distance is often defined for multisets (that is, allowing for points to appear multiple times in a list), we only give it for sets. In a nondiscrete metric space every multiset has arbitrarily small bottleneck distance from a set whose elements only appear once.
\end{itemize}

It can be shown that $W_1(\mu,\nu)\le (\diam(X)+1)\delta(\mu,\nu)$ for every pair of measures $\mu,\nu\in\mathcal{M}(X)$ (see \cite[Theorem 2]{Gibbs:2002aa}), but in general $\delta$-convergence is strictly stronger than weak-$^*$ convergence. However, the metric topology induced by $\delta$ coincides with  the weak-$^*$ topology on the dense $G_\delta$ subset of $\mathcal{M}(X)$ consisting of fully supported diffuse measures, denoted $\mathcal{M}_g(X)$. (Using the notation of \cite{Fathi:1980aa}, $g$ stands for `good'.) It is also useful to note that, by tightness of Borel probability measures on compact metric spaces, we may consider only open sets in the definition of $\delta$. In fact, for spaces such as lines and circles it suffices to consider open intervals or arcs; see the proof of \cite[Theorem 2.1]{Hiai:1989aa}.

The following is the $\cs$-analogue of the classical Monge--Kantorovich transportation property.

\begin{definition} \label{maindef}
Let $(X,d)$ be a nonempty compact path-connected metric space. Say that $X$ \emph{admits continuous transport} if it has the following transport property: for every $\mu,\nu\in\mathcal{M}_g(X)$ there exists a homeomorphism $h\colon X\to X$ such that $h_*\nu=\mu$ and
\begin{equation} \label{transport}
\sup_{x\in X}d(h(x),x) \le \delta(\mu,\nu).
\end{equation}
Say that $X$ \emph{admits approximate continuous transport} if it approximately has the transport property; that is, for every $\mu,\nu\in\mathcal{M}_g(X)$ there is a sequence of homeomorphisms $h_n\colon X\to X$ such that $(h_n)_*\nu\to\mu$ in the weak-$^*$-topology and
\begin{equation} \label{approxtransport}
\limsup_n\sup_{x\in X}d(h_n(x),x)\le \delta(\mu,\nu).
\end{equation}
\end{definition}

Recall that a \emph{Peano continuum} is a compact, connected, locally connected metric space, or, equivalently, a continuous image of $[0,1]$. For such a space $X$ and $\mu,\nu\in\mathcal{M}_g(X)$ there exists a homeomorphism $h\colon X\to X$ such that $h_*\nu=\mu$ (see for example \cite[Corollary 9.7.3]{Bogachev:2007aa}). For some applications in the sequel, such as Theorem~\ref{main1}, a continuous surjection will be good enough. That said, if $X$ is a topological manifold then homeomorphisms certainly are on the table. This is the content of the Oxtoby--Ulam Theorem \cite{Oxtoby:1941aa}. At least in this case, then, we are optimising over a nonempty set. One way of looking at the Oxtoby--Ulam Theorem is that, for a fixed $\nu\in\mathcal{M}^\partial_g(X)$, the map
\[
\rho\colon\mathcal{H}^\partial(X)\to M_g^\partial(X), \quad h\mapsto h_*\nu
\]
is surjective. Here, $\mathcal{H}^\partial(X)$ denotes the set of homeomorphisms of $X$ that fix the boundary $\partial X$, and $\mathcal{M}_g^\partial(X)$ consists of those measures in $\mathcal{M}_g(X)$ that are zero on $\partial X$. It was shown in \cite[Theorem 3.3]{Fathi:1980aa} that $\rho$ has a continuous section, at least if one restricts to measures having the same zero sets as $\nu$. A 1-\emph{Lipschitz} section (with respect to the uniform metric and optimal matching distance) would imply continuous transport. The motivating idea of this article is that this is a reasonable demand for spaces that are sufficiently uniform.

Below, we bring this idea to fruition for intervals, arcs and circles. These spaces are of particular interest as they provide spectra (without gaps) of self-adjoint and unitary operators in $\cs$-algebras.  We then generalise this approach to show that every compact convex subset of $\mathbb R^n$ with nonempty interior satisfies the approximate continuous transport property (Theorem~\ref{balls}). The proof of this resembles the one for the circle, but we will treat the circle separately since it presents fewer technical difficulties.

\begin{proposition} \label{transitive}
Suppose that $X\subseteq\mathbb{C}$,  equipped with the Euclidean metric, is either
\begin{enumerate}[label=(\roman*)]
\item an interval, or
\item a circular arc that subtends an angle of at most $\pi$.
\end{enumerate}
Then for any $\nu\in\mathcal{M}_g(X)$, the map
\[
\rho_\nu\colon \mathcal{H}^\partial(X)\to\mathcal{M}_g(X),\quad \rho_\nu(h)=h_*\nu
\]
has a $1$-Lipschitz section $\sigma\colon \mathcal{M}_g(X)\to\mathcal{H}^\partial(X)$. In particular, intervals and circular arcs admit continuous transport.
\end{proposition}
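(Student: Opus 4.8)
The plan is to write down an explicit section built from cumulative distribution functions, verify the $1$-Lipschitz bound directly from the definition of $\delta$, and then reduce a circular arc to an interval by a monotone change of parameter. Since an interval in $\cc$ with the Euclidean metric is isometric to $[0,L]$, where $L$ is its length, and rescaling the metric by a constant rescales both $\delta$ and the uniform metric by that constant, I may take $X=[0,1]$. Given $\mu\in\mathcal{M}_g(X)$, let $F_\mu(t)=\mu([0,t])$ be its distribution function: full support makes $F_\mu$ strictly increasing and diffuseness makes it continuous, so $F_\mu\colon[0,1]\to[0,1]$ is an increasing homeomorphism. I would set
\[
\sigma(\mu)=F_\mu^{-1}\circ F_\nu .
\]
Then $\sigma(\mu)$ is an increasing self-homeomorphism of $[0,1]$, hence lies in $\mathcal{H}^\partial(X)$; a one-line computation with distribution functions gives $\sigma(\mu)_*\nu=\mu$, so $\sigma$ is a section of $\pi$; and $\sigma(\nu)=\id$.

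The substance is the estimate $\|\sigma(\mu)-\sigma(\mu')\|_\infty\le\delta(\mu,\mu')$, and since $F_\nu$ is a bijection of $[0,1]$ the left-hand side equals $\|F_\mu^{-1}-F_{\mu'}^{-1}\|_\infty$. I would fix $r>\delta(\mu,\mu')$ and apply the defining inequalities of $\delta$ with $U=[0,t]$, for which $U_r=[0,t+r)\cap[0,1]$; using continuity of $F_{\mu'}$ this yields $F_\mu(t)\le F_{\mu'}(t+r)$ (the case $t+r\ge1$ being trivial), hence $F_{\mu'}^{-1}(F_\mu(t))\le t+r$, and writing $s=F_\mu(t)$ gives $F_{\mu'}^{-1}(s)-F_\mu^{-1}(s)\le r$ for every $s\in[0,1]$. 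By symmetry $\|F_\mu^{-1}-F_{\mu'}^{-1}\|_\infty\le r$, and letting $r\downarrow\delta(\mu,\mu')$ finishes the bound. Taking $\mu'=\nu$, so that $\sigma(\mu')=\id$, recovers continuous transport for intervals.

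For a circular arc $X$ subtending an angle $2\alpha\le\pi$ with the chordal metric $d$, let $g\colon X\to[0,2\alpha]$ be the arc-length parametrisation, an isometry for the angular metric $d_a$. Because $2\alpha\le\pi$, for all $p,q\in X$ one has $d(p,q)=2\sin\!\big(\tfrac12 d_a(p,q)\big)=\phi\big(d_a(p,q)\big)$ for a continuous strictly increasing bijection $\phi\colon[0,2\alpha]\to[0,\diam X]$. Monotonicity of $\phi$ shows that the chordal $r$-neighbourhood of a Borel set is its angular $\phi^{-1}(r)$-neighbourhood, so $\delta(\mu,\mu')=\phi\big(\delta_a(\mu,\mu')\big)$, and likewise $\sup_x d(h(x),h'(x))=\phi\big(\sup_x d_a(h(x),h'(x))\big)$ for homeomorphisms $h,h'$ of $X$. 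I would then transport the interval section through $g$, setting $\sigma_X(\mu)=g^{-1}\circ\sigma(g_*\mu)\circ g$ with base measure $g_*\nu$; this is a boundary-fixing homeomorphism and a section of $\pi$, and combining the two displayed identities with the isometry $g$ and the interval estimate turns the interval's $1$-Lipschitz bound into $\sup_x d(\sigma_X(\mu)(x),\sigma_X(\mu')(x))\le\phi(\delta_a(\mu,\mu'))=\delta(\mu,\mu')$; taking $\mu'=\nu$ again gives continuous transport. The only genuinely delicate point — and the reason the hypothesis on the angle cannot be dropped — is the identity $d=\phi\circ d_a$ with $\phi$ monotone: once the arc exceeds a semicircle, $\sin(\tfrac12 d_a)$ ceases to be monotone in $d_a$, the diameter of $X$ is attained in its interior, and the reduction to the interval collapses.
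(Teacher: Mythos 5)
Your proof is correct and follows essentially the same route as the paper: the increasing rearrangement $F_\mu^{-1}\circ F_\nu$ with a direct cumulative-distribution estimate for the interval, and a reduction of the arc to the interval using that the chordal metric is a monotone (indeed concave/subadditive) function of arc length precisely when the angle is at most $\pi$. Your version merely spells out the neighbourhood and supremum identities that the paper summarises by saying the result transfers to $\tilde\rho=f\circ\rho$ for subadditive strictly increasing $f$ with $f(0)=0$.
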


\begin{proof}
(i) For $\mu\in\mathcal{M}_g([0,1])$, define $f_\mu\colon[0,1]\to[0,1]$ by $f_\mu(t)=\mu[0,t)$. Each $f_\mu$ is increasing and continuous, hence an order preserving homeomorphism. Fix $\nu\in\mathcal M_f([0,1])$, and define $\sigma\colon \mathcal{M}_g([0,1])\to\mathcal{H}^\partial([0,1])$ by $\sigma(\mu)=f_\mu^{-1}\circ f_\nu$. In other words, $\sigma(\mu)$ is the well-known `increasing rearrangement' map; see \cite[Chapter 1]{Villani:2009aa}. First we check that $\sigma$ is right-inverse to $\rho_\nu$: for $\mu\in\mathcal{M}_g([0,1])$ and $t\in[0,1]$ we have
\[
\rho_\nu\sigma(\mu)[0,t) = \nu(\sigma(\mu)^{-1}[0,t)) = \nu[0,f_\nu^{-1}f_\mu(t)) = f_\mu(t) = \mu[0,t)
\]
which implies that $\rho_\nu\sigma(\mu)=\mu$.

It remains to check that $\sigma$ is $1$-Lipschitz (with respect to the uniform metric and optimal matching distance). Let $\mu_1,\mu_2\in\mathcal{M}_g([0,1])$ and write $\delta(\mu_1,\mu_2) = r$. For $t\in[0,1]$ and $i\in\{1,2\}$, let $s_i\in[0,1]$ such that $f_{\mu_i}(s_i) = f_\nu(t)$. That is, $s_i=\sigma(\mu_i)(t)$. Since
\[
f_{\mu_2}(s_1-r) \le f_{\mu_1}(s_1) = f_{\mu_2}(s_2) \le f_{\mu_2}(s_1+r),
\]
it follows that $s_1-r\le s_2\le s_1+r$. Thus
\[
\sup_{t\in[0,1]} |\sigma(\mu_1)(t) - \sigma(\mu_2)(t)| \le \delta(\mu_1,\mu_2).
\]
To deduce continuous transport, take $\mu_1=\mu$, $\mu_2=\nu$.

(ii) For some $\theta\in (0, \pi]$, the map $\gamma \colon [0,1] \to e^{i t \theta}$ defines a homeomorphism and for any $\mu \in \mathcal{M}_g(X)$ we have $\mu \circ \gamma \in \mathcal{M}_g([0,1])$. Fix $\nu\in\mathcal M_g(X)$, and define 
\[ \sigma \colon \mathcal M_g(X) \to \mathcal{H}^{\delta}(X)\]
by 
\[ \sigma(\mu)(e^{it\theta}) = f_{\mu\circ\gamma}^{-1} \circ f_{\nu \circ \gamma}(t),\]
where, for $\mu \in \mathcal{M}_g([0,1])$, the function $f_{\mu}$ is as in (i). Proceeding as above, it is easy to check that $\sigma$ is a right-inverse to $\rho_\nu$.

Now suppose that $\delta(\mu_1, \mu_2) = d$. Then since $\theta \in (0, \pi]$, we have that $\delta_{[0,1]}( \mu_1 \circ \gamma, \mu_2 \circ \gamma) = \frac{2}{\theta} \sin^{-1}(d/2)$, since $\delta_{[0,1]}$ is defined by the arc-length metric $\rho_1$, normalised by the length of the arc $X$. By part (i), we have
\[ \sup_{t\in [0,1]} \theta^{-1} \rho_1(\sigma(\mu_1 \circ \gamma)(t), \sigma(\mu_2 \circ \gamma)(t)) \leq \frac{2}{\theta} \sin^{-1}(d/2).\]
Thus for any $t \in [0,1]$
\[ \frac{1}{2} | \sigma(\mu_1 \circ \gamma)(t) - \sigma(\mu_2 \circ \gamma)(t) | \leq d/2,\]
from which it follows that
\[ \sup_{x \in X} | \sigma(\mu_1)(x) - \sigma(\mu_2)(x)| \leq \delta(\mu_1, \mu_2).\]
Hence $\sigma$ is $1$-Lipschitz, as required.
\end{proof}

The following provides a means of demonstrating approximate continuous transport both for the circle $\mathbb{T}$  and for compact convex subsets of $\mathbb{R}^n$ with nonempty interior.

\begin{lemma} \label{finiteapproximation}
Let $(X,d)$ be a compact metric space, let $\mu\in\mathcal{M}_g(X)$ and let $\varepsilon>0$. Then there exists $\mu'\in\mathcal{M}(X)$ of finite support such that $\delta(\mu,\mu')<\varepsilon$.
\end{lemma}

\begin{proof}
It is not hard to show that there exist pairwise disjoint open subsets $U_1,\ldots,U_m$ of $X$, each of diameter $<\varepsilon$, such that $\mu(X\setminus\bigcup_{i=1}^m U_i) = 0$. (See for example the proof of \cite[Lemma 4.1]{Matui:2011uq}.) We may also assume that there are natural numbers $M$ and $k_1,\ldots,k_m$ such that $\sum k_i=M$ and $\mu(U_i)=\frac{k_i}{M}$ for every $i$. For each $i$, choose distinct points $x_1^i,\ldots,x_{k_i}^i\in U_i$. Set $\mu'=\frac{1}{M}\sum_{i,j}\delta_{x_j^i}$.  

Now fix an open set $U\subseteq X$. Let $p$ be minimal so that (after reordering) $\mu(U\setminus\bigcup_{i=1}^pU_i) =0$. By minimality, $U\subseteq (\bigcup_{i=1}^p U_i) \cup E$ where $E \subset X\setminus \bigcup_{i=1}^m U_i$ is a set of measure zero.  In particular $\delta_{x^i_j}(E) = 0$ for any $i > p$, $1\leq j \leq k_i$. Thus 
\begin{align*}
\mu'(U) &\le \mu'\left(\bigcup_{i=1}^pU_i\right) = \mu\left(\bigcup_{i=1}^pU_i\right) \le\mu(U_\varepsilon),
\end{align*}
and the other way round. It follows that $\delta(\mu,\mu')<\varepsilon$.
\end{proof}

\begin{lemma} \label{ordering}
Let $n >0$ and $T, S \subseteq \mathbb{T}$ with $|T|=|S|=n$. Define Borel probability measures $\mu = \frac{1}{n}\sum_{t\in T} \delta_{t}$ and $\nu = \frac{1}{n}\sum_{s \in S} \delta_{s}$. Then if $s_1, \dots, s_n$ is an anticlockwise ordering of $S$ there is an anticlockwise ordering $t_1,\ldots,t_n$ of $T$ such that \[ \max_{1\le i\le n}|s_i - t_i| = \delta(\mu, \nu).\]
\end{lemma}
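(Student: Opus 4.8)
The plan is to reduce the circular problem to the linear problem already solved (implicitly) via Hall's marriage theorem in equation (1.2), by carefully choosing where to "cut" the circle. First I would recall that, by \eqref{delta}, $\delta(\mu,\nu) = \min_{\sigma \in S_n} \max_i |s_i - t_{\sigma(i)}|$, so that there exists \emph{some} bijection $\phi\colon S \to T$ achieving the bottleneck distance $r := \delta(\mu,\nu)$; the content of the lemma is that among the optimal bijections one may be chosen that respects the cyclic order. So the real statement is: there is a bijection $\phi$ with $\max_{s}|s - \phi(s)| = r$ that is cyclic-order-preserving, i.e.\ sends a fixed anticlockwise enumeration $s_1,\dots,s_n$ to an anticlockwise enumeration $t_1,\dots,t_n$ (possibly with a cyclic shift).

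The key steps I would carry out are as follows. (1) Take any optimal bijection $\phi$ and show by an exchange/uncrossing argument that it may be assumed to have no "crossings" in the following sense: there are no indices with $s_i, s_j$ and $\phi(s_i), \phi(s_j)$ linked on the circle in a way that, after swapping the two assignments, strictly decreases \emph{or at least does not increase} the maximum displacement. Here one must be careful: on a circle, unlike on a line, "uncrossing" two chords is not always distance-nonincreasing, because the two arcs between $s_i$ and $s_j$ have different lengths. The correct invariant to track is the \emph{winding number} of the matching: associate to $\phi$ the integer $w(\phi)$ counting the net number of times the assignment $s \mapsto \phi(s)$ wraps around $\mathbb{T}$ as $s$ traverses the circle once. (2) Show that an order-preserving optimal matching corresponds precisely to an optimal matching of winding number zero (or, equivalently, after rotating the labels of $T$, to the "shift by $k$" matching $s_i \mapsto t_{i+k}$ for the right $k$). (3) The heart of the argument: prove that there \emph{exists} an optimal matching of winding number zero. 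For this I would fix an optimal matching $\phi$, and if $w(\phi) \neq 0$, cut the circle at a point $p$ that is not in $T \cup S$ and is chosen so that $\phi$, viewed as a partial matching on the arc $\mathbb{T} \setminus \{p\} \cong (0,1)$, can be "unwound": concretely, one shows that if every optimal matching had nonzero winding number then by a pigeonhole/continuity argument one could find a point through which \emph{no} matched chord passes, contradicting $w(\phi) \neq 0$; cutting there linearises the problem, and the linear case (Hall / increasing rearrangement, as in Proposition~\ref{transitive}(i) and the proof of \cite[Theorem 2.1]{Hiai:1989aa}) gives an order-preserving matching on the arc with the same bound $r$.

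The main obstacle I expect is step (3), and specifically handling the case where the optimal displacement $r$ is large enough (close to half the circumference) that matched chords genuinely can be forced to "go the long way around" — one must rule out that \emph{every} optimal matching is topologically nontrivial. The clean way to do this, which I would pursue, is a direct argument: order $S$ anticlockwise as $s_1, \dots, s_n$; for each cyclic shift $k \in \{0, 1, \dots, n-1\}$ consider the candidate matching $s_i \mapsto t_{i+k}$ (indices mod $n$) where $t_1, \dots, t_n$ is the fixed anticlockwise order of $T$; set $M_k := \max_i |s_i - t_{i+k}|$; and show $\min_k M_k = r$. The inequality $\min_k M_k \geq r$ is immediate since each candidate is a genuine bijection and $r$ is the minimum over \emph{all} bijections. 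For $\min_k M_k \leq r$: take an optimal bijection $\phi$, and use the uncrossing lemma from step (1) to replace it by a \emph{non-crossing} optimal bijection $\psi$ (a swap of two chords that cross on the short arcs is always nonincreasing — this local statement \emph{is} true and is where the subtlety is confined); a non-crossing bijection of $n$ points to $n$ points on a circle is necessarily a cyclic shift, i.e.\ equals $s_i \mapsto t_{i+k}$ for some $k$, whence $M_k = \max_i|s_i - \psi(s_i)| \leq r$. Thus the whole difficulty is isolated in the purely combinatorial claim that a crossing pair of chords on the circle can always be uncrossed without increasing the bottleneck cost, which I would prove by a short case analysis on the positions of the four points $s_i, s_j, \phi(s_i), \phi(s_j)$ on $\mathbb{T}$.
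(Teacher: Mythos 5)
Your proposal takes a different route from the paper: the paper gives no combinatorial argument at all, but simply invokes the proof of the main result of \cite{Bhatia:1984aa} (see also Remark~\ref{matching}(ii)), where this circular rearrangement fact is established. As a self-contained argument, your plan has a genuine gap, and it sits exactly where you say the difficulty is ``confined''. The local claim that swapping a crossing pair of matched chords never increases the bottleneck cost is false on the circle. Take $s_i=1$, $s_j=e^{i\pi/18}$, $\phi(s_i)=e^{i\pi/9}$, $\phi(s_j)=-1$. The two chords (and the two short arcs) cross, and the matched costs are $2\sin(\pi/18)\approx 0.35$ and $2\sin(17\pi/36)\approx 1.99$, so the maximum is about $1.99$; after the swap the costs are $|1-(-1)|=2$ and $2\sin(\pi/36)\approx 0.17$, so the maximum strictly increases to $2$. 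Worse, in this configuration the crossing matching is strictly better than the uncrossed one, so an optimal matching may be \emph{forced} to contain crossing chords: the step ``replace $\phi$ by a non-crossing optimal bijection $\psi$'' is not in general available, and the four-point exchange inequality that drives the linear (Hall/increasing rearrangement) case genuinely fails on the circle. This failure is precisely why the circular case requires a different argument.

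The other load-bearing claim, that a chord-non-crossing bijection between two $n$-point subsets of $\mathbb{T}$ is necessarily a cyclic shift, is also false. Place six points anticlockwise at angles $0,\pi/3,2\pi/3,\pi,4\pi/3,5\pi/3$, labelled $s_1,t_3,s_2,t_2,s_3,t_1$ respectively, and match $s_i\mapsto t_i$. The three chords are pairwise non-crossing (in each pair of chords the endpoints do not interleave on the circle), yet the matching sends the anticlockwise order $s_1,s_2,s_3$ to $t_1,t_2,t_3$, which is the \emph{clockwise} cyclic order of $T$, so it is not of the form $s_i\mapsto t_{i+k}$ for any $k$. So chord-non-crossing and cyclic-order-preserving are genuinely different notions, and your reduction (uncross, then read off a shift) breaks at both steps. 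To repair it you would need a notion of ``crossing'' formulated in terms of the cyclic order of the four points, together with an exchange inequality valid for that notion, or else a global argument showing directly that one of the $n$ shift matchings attains the bottleneck value $r$; the paper sidesteps all of this by citing the proof in \cite{Bhatia:1984aa}.
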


\begin{proof}
This follows from the proof of the main result of \cite{Bhatia:1984aa}. See also Remark~\ref{matching} below.
\end{proof}

\begin{proposition} \label{circles}
Circles admit approximate continuous transport.
\end{proposition}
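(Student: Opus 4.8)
The plan is to realise the transport by a monotone rearrangement which, since $\mathbb{T}$ has no canonical basepoint, has been rotated by an amount dictated by Lemma~\ref{ordering}. Identify $\mathbb{T}$ with $\mathbb{R}/2\pi\mathbb{Z}$ and fix $\mu,\nu\in\mathcal{M}_g(\mathbb{T})$ with $r:=\delta(\mu,\nu)$. Fixing a basepoint, the anticlockwise cumulative distribution function of $\nu$ lifts to an increasing homeomorphism $F_\nu\colon\mathbb{R}\to\mathbb{R}$ with $F_\nu(t+2\pi)=F_\nu(t)+1$ (it is continuous because $\nu$ is diffuse and strictly increasing because $\nu$ is fully supported), and similarly for $\mu$. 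For $c\in\mathbb{R}$ the formula $\tilde{h}_c(t):=F_\mu^{-1}(c+F_\nu(t))$ defines an increasing homeomorphism of $\mathbb{R}$ commuting with translation by $2\pi$, hence descends to a homeomorphism $h_c$ of $\mathbb{T}$; and since $F_\mu(\tilde{h}_c(b))-F_\mu(\tilde{h}_c(a))=F_\nu(b)-F_\nu(a)$ for $a<b$, we have $(h_c)_*\nu=\mu$ for every $c$. These are precisely the orientation-preserving homeomorphisms carrying $\nu$ to $\mu$; the new feature compared with Proposition~\ref{transitive} is the rotational parameter $c$, and the task is to choose it well.

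To do so I would discretise. For each $N$ let $z_0<z_1<\dots<z_N=z_0+2\pi$ be the consecutive $\tfrac1N$-quantiles of $\nu$ and $w_0<\dots<w_N=w_0+2\pi$ those of $\mu$; since full support makes $F_\mu^{-1}$ and $F_\nu^{-1}$ uniformly continuous, we may fix $s_N\to0$ exceeding the diameter of every $\tfrac1N$-cell. Put $\nu_N=\tfrac1N\sum_{i=1}^N\delta_{z_{i-1}}$ and $\mu_N=\tfrac1N\sum_{j=1}^N\delta_{w_{j-1}}$. Feeding these cell decompositions into the definition of $\delta$ gives $\delta(\nu_N,\nu)\le s_N$ and $\delta(\mu_N,\mu)\le s_N$, whence $\delta(\mu_N,\nu_N)\le r+2s_N$. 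Since $z_0,\dots,z_{N-1}$ and $w_0,\dots,w_{N-1}$ are listed anticlockwise and any anticlockwise reordering of the latter is a cyclic shift, Lemma~\ref{ordering} provides $k_N\in\{0,\dots,N-1\}$ with
\[
\max_{1\le i\le N}d\big(z_{i-1},\,w_{(i-1+k_N)\bmod N}\big)=\delta(\mu_N,\nu_N)\le r+2s_N .
\]

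Now take $h_N:=h_{k_N/N}$. Since $F_\mu^{-1}\big((i-1+k_N)/N\big)$ is a lift of $w_{(i-1+k_N)\bmod N}$ and $\tilde{h}_N$ is increasing, $h_N$ carries the $\nu$-cell $[z_{i-1},z_i)$ homeomorphically onto the anticlockwise arc from $w_{(i-1+k_N)\bmod N}$ to $w_{(i+k_N)\bmod N}$, which is a single $\mu$-cell and so has diameter $<s_N$. Hence for $t$ in the $i$-th $\nu$-cell, writing $x_i:=w_{(i-1+k_N)\bmod N}$,
\[
d(h_N(t),t)\le d(h_N(t),x_i)+d(x_i,z_{i-1})+d(z_{i-1},t)<s_N+(r+2s_N)+s_N=r+4s_N .
\]
Thus $(h_N)_*\nu=\mu$ for every $N$ while $\limsup_N\sup_{t\in\mathbb{T}}d(h_N(t),t)\le r=\delta(\mu,\nu)$, which is (more than) the asserted approximate continuous transport.

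The steps needing the most care are the uniform shrinking of the quantile cells — where full support of $\mu$ and $\nu$ is used essentially, and which is precisely why one produces a sequence rather than a single homeomorphism at the outset — and the lift bookkeeping showing that $h_{k_N/N}$ pairs the $\nu$-cells with the $\mu$-cells selected by Lemma~\ref{ordering}. In fact one can go further: $h_{c+1}=h_c$, so $c\mapsto\sup_t d(h_c(t),t)$ is continuous on the compact space $\mathbb{R}/\mathbb{Z}$; the estimate above bounds its infimum by $\delta(\mu,\nu)$, while $\delta(\mu,\nu)\le\sup_t d(h(t),t)$ for any homeomorphism $h$ with $h_*\nu=\mu$, so the infimum is attained and $\mathbb{T}$ actually admits continuous transport — but the sequence $(h_N)$ already gives the statement as posed.
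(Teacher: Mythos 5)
Your argument is correct, but it takes a genuinely different route from the paper's. The paper approximates both $\mu$ and $\nu$ by finitely supported measures (equal-length arcs with masses perturbed to be rational), matches the atoms via Lemma~\ref{ordering}, and defines $h_n$ by interpolating between matched atoms; those $h_n$ satisfy only $(h_n)_*\nu_n=\mu_n$, and the weak-$^*$ convergence $(h_n)_*\nu\to\mu$ is then extracted by a triangle-inequality estimate. You instead exhibit the full one-parameter family $h_c$ of orientation-preserving homeomorphisms pushing $\nu$ exactly onto $\mu$ --- the circle analogue of the increasing rearrangement used in Proposition~\ref{transitive} --- and use the quantile discretisation together with Lemma~\ref{ordering} only to select the rotation parameter $c$. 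This buys two things: each $h_N$ satisfies $(h_N)_*\nu=\mu$ on the nose, so the weak-$^*$ requirement is automatic and you never need to compare $(h_N)_*\nu$ with the discrete approximations; and, since $c\mapsto\sup_t d(h_c(t),t)$ is continuous and has period $1$, your closing compactness remark upgrades the conclusion to genuine (not merely approximate) continuous transport for the circle, which is stronger than what the paper establishes (it only claims circles admit ``at least'' approximate continuous transport). The only point to tidy is the normalisation in the lift bookkeeping: fix the basepoints so that $F_\nu(z_0)=0$ and $F_\mu(w_0)=0$ and take the quantiles starting from these points; then $\tilde{h}_{k_N/N}(z_{i-1})=F_\mu^{-1}\bigl((i-1+k_N)/N\bigr)$ is indeed a lift of $w_{(i-1+k_N)\bmod N}$, the image of each $\nu$-cell is a single $\mu$-cell, and your displacement estimate $d(h_N(t),t)<r+4s_N$ goes through verbatim.
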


\begin{proof}
Let $\mu,\nu\in\mathcal{M}_g(\mathbb{T})$. By Lemma~\ref{finiteapproximation}, for every $n\in\mathbb{N}$ there exist $M_n\in\mathbb{N}$ and finite sets $T_n'=\{s_1^n,\ldots,s_{M_n}^n\}$ and $T_n=\{t_1^n,\ldots,t_{M_n}^n\}$ such that  $\mu_n = \frac{1}{M_n}\sum_{t\in T_n} \delta_{t}$ and $\nu_n = \frac{1}{M_n}\sum_{s \in T_n'}\delta_{s}$ are probability measures with
\[
\max\{\delta(\mu_n,\mu),\delta(\nu_n,\nu)\} < \frac{1}{2^n}.
\]
By Lemma~\ref{ordering} we may assume $T_n'=\{s_1^n,\ldots,s_{M_n}^n\}$ and $T_n=\{t_1^n,\ldots,t_{M_n}^n\}$ are ordered anticlockwise so that  \[ \max_{1\le i\le M_n}|s_i^n - t_i^n| = \delta(\mu_n,\nu_n).\] Define $h_n\colon \mathbb{T}\to\mathbb{T} $ by setting $h_n(s_i^n)=t_i^n$ and interpolating logarithmically linearly (so each arc $[s_i^n,s_{i+1}^n]$ is mapped onto $[t_i^n,t_{i+1}^n]$). Then each $h_n$ is a homeomorphism such that $(h_n)_*\nu_n=\mu_n$ and $\sup_{t\in\mathbb{T}}|h_n(t)-t| = \delta(\mu_n,\nu_n)$. Notice that each $h_n$ is $\delta(\mu_n,\nu_n)$-Lipschitz, and therefore, for $n$ large enough, each $h_n$ is $2\delta(\mu,\nu)$-Lipschitz. Then we have
\begin{align} \label{circleargument}
\delta((h_n)_*\nu,\mu) &\le \delta((h_n)_*\nu,(h_n)_*\nu_n) + \delta((h_n)_*\nu_n,\mu_n) + \delta(\mu_n,\mu) \\
&\le 2\delta(\mu,\nu)\cdot\delta(\nu,\nu_n) + \delta(\mu_n,\mu) \nonumber \\
& \to 0 \quad \textrm {as } n\to\infty, \nonumber,
\end{align}
so $(h_n)_*\nu\to^{w^*}\mu$.
\end{proof}

\begin{remark} \label{polygons}
By a similar argument, one can show that there is a decreasing sequence $(c_k)_{k=1}^\infty\subseteq\mathbb{R}$ with $c_k\to1$ as $k\to\infty$ such that the boundary of a regular $k$-gon $X_k$ admits approximate continuous transport scaled by $c_k$. That is, for any $\mu,\nu\in\mathcal{M}_g(X_k)$, (\ref{approxtransport}) holds with upper bound $c_k\delta(\mu,\nu)$. In general one would expect the best constant $c_X$ for a space $X$ to depend on its geometry: for example, if $X$ is formed by two copies of $[0,1]$ attached at $0$, then $c_X$ depends on the angle these two intervals form, and $c_X\to1$ as the angle tends to $\pi$.
\end{remark}

Our next goal is to prove that, for $n \geq 2$, compact convex subsets of $\mathbb R^n$ with nonempty interior have approximate continuous transport.  The strategy is similar to the one used in Proposition~\ref{circles}, hence we need to optimally match large finite sets in $\rr^n$. This is quite easy if $n\geq 3$, and becomes trickier when $n=2$.

\begin{definition}
Let $F$ and $G$ be finite subsets of $\rr^n$ of the same size. We say that $F=\{x_1,\ldots,x_m\}$ and $G=\{y_1,\ldots,y_m\}$ are \emph{enumerated optimally} if for all $i\leq m$,
\begin{equation} \label{bottleneck}
d(x_i,y_i)=\mathfrak b(\{x_1,\ldots,x_i\},\{y_1,\ldots,y_i\}),
\end{equation}
where $\mathfrak b$ is the bottleneck distance defined after equation~\eqref{delta}.
\end{definition}
Given finite sets of the same size, say $F$ and $G$, it is always possible to enumerate them optimally. For this, enumerate $F=\{f_1,\ldots,f_m\}$ and $G=\{g_1,\ldots,g_m\}$, and let $\sigma\in S_m$ be a bijection witnessing that $\mathfrak b(F,G)=\max_i d(f_i,g_{\sigma(i)})$. Let $i$ be the minimal index such that $d(f_i,g_{\sigma(i)})$ is maximal, and set $x_m=f_i$ and $y_m=g_{\sigma(i)}$. We then repeat such process on $F\setminus\{x_m\}$ and $G\setminus\{y_m\}$. 
If $X$ is a compact convex subset of $\rr^n$, we denote the interior of $X$ by $\mathring X$. 

\begin{proposition} \label{prop:opensets}
Let $n\geq 3$, and let $X\subseteq\rr^n$ be a compact convex subset with $\mathring X\neq\emptyset$. Let $F=\{x_1,\ldots,x_m\}$ and $G=\{y_1,\ldots,y_m\}$ be two finite disjoint subset of $\mathring{X}$ which are enumerated optimally. Then there are mutually disjoint paths $\alpha_i\colon [0,1]\to\mathring{X}$, $i\in\{1,\ldots,m\}$, with $\alpha_i(0)=x_i$, $\alpha_i(1)=y_i$ and
\begin{equation} \label{mid}
\alpha_i(t)\subseteq \left \{z \,\mid d(z,\frac{x_i+y_i}{2})<\frac{1}{2}\mathfrak b(F,G) \right \} \quad \textrm{ for all } t\in (0,1).
\end{equation}
In particular, for every $i\in\{1,\ldots,m\}$,
\begin{equation} \label{shortpaths}
\sup_{y,z\in \alpha_i}d(y,z) \le \mathfrak{b}(F,G).
\end{equation}
\end{proposition}

\begin{proof}
Connect each $x_i$ to $y_i$ by a straight line $\tilde{\alpha}_i$. Wherever $\tilde{\alpha}_1$ crosses another path, perturb it by a small circular arc in a plane containing $\tilde{\alpha}_1$ but no other $\tilde{\alpha}_i$, and set this path as $\alpha_1$. Continue to replace each $\tilde{\alpha}_i$ with a possibly perturbed path $\alpha_i$ in turn. Note that straight lines perturbed in this way will satisfy (\ref{mid}): since $F$ and $G$ are disjoint the points of intersections of $\tilde\alpha_i$ and $\tilde\alpha_j$ are in the interior of $\{z\mid d(z,(x_i+y_i)/2),d(z,(x_j+y_j)/2)\leq\mathfrak b(F,G)\}$. Moreover, as long as we did not perturb each $\tilde{\alpha}_i$ too much, we can ensure that our paths are completely contained in $\mathring{X}$.
\end{proof}

When $n = 2$ there is less room to manoeuvre. We identify a path $\alpha$ with its image $\alpha([0,1])$. 

If $X\subseteq\rr^2$ is a compact convex set with nonempty interior, and $F=\{x_1,\ldots,x_m\}$ and $G=\{y_1,\ldots,y_m\}$ are disjoint subsets of $\mathring{X}$, we denote by $\mathcal P(X,F,G)$ the class of all $m$-ples $\bar \alpha=(\alpha_1,\ldots,\alpha_m)$ where, for each $i\leq m$, \begin{itemize}
\item  $\alpha_i$ is an injective path contained in $\mathring{X}$ connecting $x_i$ to $y_i$, 
\item if $i\neq j$ then $\alpha_i\cap\alpha_j$ is finite,
\item if $i\neq j$ then $x_i\notin\alpha_j$ and $y_i\notin \alpha_j$, and
\item  $\alpha_i((0,1))\subseteq B_{\varepsilon_i/2}(\frac{x_i+y_i}{2})$, where $\varepsilon_i=d(x_i,y_i)$.
\end{itemize} 
Notice that $\mathcal P(X,F,G)$ is nonempty: the only nontrivial condition, that is, that $\alpha_i\cap\alpha_j$ is finite whenever $i\neq j$, can be ensured by the fact that piecewise linear functions are dense in the set of paths. The third condition can be ensured as the two sets are disjoint.

\begin{proposition} \label{prop:opensets2}
Let $X\subseteq\rr^2$ be a convex subset with $\mathring X\neq\emptyset$. Let $F=\{x_1,\ldots,x_m\}$ and $G=\{y_1,\ldots,y_m\}$ be finite disjoint subsets of $\mathring{X}$ which are optimally enumerated. Then there is $\bar\alpha\in\mathcal P(X,F,G)$ with the property that $\alpha_i\cap\alpha_j=\emptyset$ whenever $i\neq j$.
\end{proposition}

\begin{proof} 
The whole argument takes place in $\mathring{X}$, so we can assume $X$ is open.
We will work by contradiction, and suppose that $F$, $G$ and $X$ are as in the hypothesis and such that whenever $\bar\alpha\in \mathcal P(X,F,G)$ then there are $i$ and $j$ with $i\neq j$ and $\alpha_i\cap\alpha_j\neq\emptyset$. Let $z_i=\frac{x_i+y_i}{2}$, $\varepsilon_i=d(x_i,y_i)$, and let $\mathcal P=\mathcal P(X,F,G)$. Since each $x_i$ and each $y_i$ are in $X$, so does $z_i$. 

For $\bar\alpha\in\mathcal P$, define
\[
i_{\bar\alpha}=\max \{i\mid\exists j<i(\alpha_i\cap\alpha_j\neq\emptyset)\}, \text{ and } j_{\bar\alpha}=\max\{j< i_{\bar\alpha}\mid \alpha_j\cap\alpha_{i_{\bar\alpha}}\neq\emptyset\}.
\]
Pick $\bar\alpha\in\mathcal P$ minimal for the choice of $i_{\bar\alpha}$, $j_{\bar\alpha}$ and $|\alpha_{j_{\bar\alpha}}\cap\alpha_{i_{\bar\alpha}}|$, in the sense that whenever $\bar\gamma\in\mathcal P$ then
\begin{itemize}
\item $i_{\bar\gamma}\geq i_{\bar\alpha}$
\item if $i_{\bar\gamma}=i_{\bar\alpha}$ then $j_{\bar\gamma}\geq j_{\bar\alpha}$, and
\item if $i_{\bar\gamma}= i_{\bar\alpha}$ and $j_{\bar\gamma}= j_{\bar\alpha}$, then
\[
|\alpha_{j_{\bar\alpha}}\cap\alpha_{i_{\bar\alpha}}|\leq |\gamma_{j_{\bar\gamma}}\cap\gamma_{i_{\bar\gamma}}|.
\]
\end{itemize}
Set 
\[
\bar i=i_{\bar\alpha}, \, \bar j=j_{\bar\alpha}, \, \alpha=\alpha_{\bar i}, \,\text{ and } \beta=\alpha_{\bar j}.
\]
We will modify $\alpha$ and $\beta$ to injective paths $\alpha'$ and $\beta'$ with the same endpoints and the property that 
\[
|\alpha'\cap\beta'|<|\alpha\cap\beta|, \, \alpha'((0,1))\subseteq B_{\varepsilon_{\bar i}}(z_{\bar i}), \,\beta'((0,1))\subseteq B_{\varepsilon_{\bar j}}(z_{\bar j}),
\]
and moreover
\begin{itemize}
\item if $i>\bar i$, then $\alpha'\cup\beta'$ and $\alpha_i$ are disjoint,
\item and if $j$ is such that $\bar j<j<\bar i$ then $\alpha'\cap\alpha_j=\emptyset$.
\end{itemize} 
Setting $\bar\alpha'\in\mathcal P$ by $\alpha'_i=\alpha_i$ if $i\neq\bar i,\bar j$, $\alpha'_{\bar i}=\alpha'$ and $\alpha'_{\bar j}=\beta'$ would contradict the minimality of $|\alpha_{j_{\bar\alpha}}\cap\alpha_{i_{\bar\alpha}}|$. Therefore, once $\alpha'$ and $\beta'$ will be constructed as above, the proposition would be proved.

Let $B=B_{\varepsilon_{\bar i}/2}(z_{\bar i})$ and let $C=\partial B$ be its boundary. The set $C\setminus\{x_{\bar i},y_{\bar i}\}$ has two connected components $C_n$ and $C_s$ (here the subscript $n$ and $s$ are for `north' and `south'). Similarly, since $\alpha_{\bar i}$ is injective, $B\setminus\alpha_{\bar i}$ has two connected components $B_n$ and $B_s$. We choose the indexes so that 
\[
\partial B_n=C_n\cup\alpha\text{ and }\partial B_s=C_s\cup\alpha.
\]
\begin{claim}
Not both $x_{\bar j}$ and $y_{\bar j}$ are in $\overline B$. Therefore $z_{\bar j}\neq z_{\bar i}$, and in particular the function $f\colon C\to\rr$ given by $f(w)=d(w,z_{\bar j})$ is not constant.
\end{claim}
\begin{proof}
Suppose both $x_{\bar j}$ and $y_{\bar j}$ are in $\overline B$. Since $F$ and $G$ are disjoint, both $x_{\bar j}$ and $y_{\bar j}$ are in $\overline B\setminus\{x_{\bar i},y_{\bar i}\}$. Since $y_{\bar i}$ is the farthest point of $B$ fom $x_{\bar i}$, then $d(x_{\bar i},y_{\bar j})<\varepsilon_{\bar i}$. Similarly $d(y_{\bar i},x_{\bar j})<\varepsilon_{\bar i}$. Since 
\[
\varepsilon_{\bar i}=d(x_{\bar i},y_{\bar i})=\mathfrak b(\{x_i\}_{i\leq\bar i},\{y_i\}_{i\leq\bar i}),
\] this is a contradiction to the fact that $F$ and $G$ are optimally enumerated.

For the second statement, notice that $\max\{d(z_{\bar j},x_{\bar j}),d(z_{\bar j},y_{\bar j})\}\leq\varepsilon_{\bar i}/2$. Since $B$ is the ball of radius $\varepsilon_{\bar i}/2$ around $z_{\bar i}$, if $z_{\bar i}=z_{\bar j}$ then $x_{\bar j},y_{\bar j}\in\overline B$, a contradiction.
\end{proof}
 Let $\delta>0$ such that $d(\alpha\cup\beta,\bigcup_{i>\bar i}\alpha_i)>\delta$. Notice that $x_{\bar i}$ and $y_{\bar i}$ are not in $\beta$ and $x_{\bar j}$ and $y_{\bar j}$ are not in $\alpha$. Since $\alpha\cap\beta\neq\emptyset$ there are $t$ and $u$ in $(0,1)$ such that $\alpha(u)=\beta(t)$. Since $\alpha\cap\beta$ is finite, there is $\eta>0$ such that $\beta((t-\eta,t+\eta))\cap\alpha=\beta(t)$. We can moreover ask that $\eta$ is small enough so that 
\[
\alpha([u-\eta,u+\eta])\cup\beta([t-\eta,t+\eta])\subseteq B_\delta(\alpha(u))\text{ and }\alpha([u-\eta,u+\eta])\cap\beta=\alpha(u).
\]
 Recall that $\alpha$ divides $B$ into two connected components $B_n$ and $B_s$. Suppose that $\beta((t-\eta,t))$ and $\beta((t,t+\eta))$ are contained in the same connected component, say $B_n$ (this happens if $\alpha$ and $\beta$ are tangent to each other). Then $\beta(t-\eta)$ and $\beta(t+\eta)$ are in the same path connected component of $B_\delta(\alpha(u))\cap B_n\cap B_{\varepsilon_j/2}(z_{\varepsilon_j})$. Let $\gamma$ be a path connecting $\beta(t-\eta)$ to $\beta(t+\eta)$ completely contained in $B_\delta(\alpha(u))\cap B_n\cap B_{\varepsilon_j/2}(z_{\varepsilon_j})$, and let
 \[
 \beta'=\beta([0,t-\eta])^{\frown}\gamma^{\frown}\beta([t+\eta,1]).
\]
Then $|\alpha\cap\beta'|=|\alpha\cap\beta|-1$, and this, setting $\alpha'=\alpha$, would lead to a contradiction. Modulo inverting north and south, we can then assume that 
 \[
 \beta((t-\eta,t))\subseteq B_n\text{ and }\beta((t,t+\eta))\subseteq B_s.
 \]
Let $r_1=\min\{r>t+\eta\mid \beta(r)\notin B_s\}$. We have three cases:

\begin{enumerate}[label=\underline{C \arabic*}]
\item $\beta((t,1])\subseteq B_s$, that is, $r_1$ cannot be defined.

\item $\beta((t,1])$ leaves $B_s$ through $\alpha$, that is, $\beta(r_1)\in\alpha$.

\item  $\beta((t,1])$ leaves $B_s$ through $C_s$, that is, $\beta(r_1)\in C_s$
\end{enumerate}
We will now show that both Case 1 and 2 lead to a contradiction, and hence that we have Case 3. (A similar argument shows that $\beta([0,t))$ enters $B_n$ from $C_n$).

\noindent\underline{CASE 1}: $\beta((t,1])\subseteq B_s$. Let 
\[
D_1=(B_{\delta}(\beta((t,1]))\cap B_s\setminus(\alpha\cup\beta))\cup\{\alpha(u-\eta),\alpha(u+\eta)\}.
\] 
Notice that $\alpha(u-\eta)$ and $\alpha(u+\eta)$ are in the same path connected component of $D_1$. Hence there is an injective path $\gamma\colon [u-\eta,u+\eta]\to D_1$ such that 
\[
\gamma(u-\eta)=\alpha(u-\eta)\text{ and }\gamma(u+\eta)=\alpha(u+\eta).
\]
 Let 
\[
\alpha'=\alpha([0,u-\eta])^{\frown}\gamma^{\frown} \alpha([u+\eta,1]).
\]
 Notice that $\alpha'\subseteq B_{\varepsilon_{\bar i}}(z_{\bar i})$, $\alpha'\cap\alpha_i=\emptyset$ if $i>\bar i$, and $|\alpha'\cap\beta|=|\alpha\cap\beta|-1$. This is a contradiction to the minimality of $|\alpha\cap\beta|$ over all elements of $\mathcal P$ such that $i_{\bar\alpha}=\bar i$ and $j_{\bar\alpha}=\bar j$.

\noindent\underline{CASE 2}: $\beta((t,1])$ leaves $B_s$ through $\alpha$. Pick $t_1=\min \{r>t\mid \beta(r)\in \alpha\}$, and $u_1$ such that $\alpha(u_1)=\beta(t_1)$. We suppose that $u_1>u$. Pick $\eta'<\eta$ small enough so that 
\[
\alpha((u_1-\eta',u_1+\eta'))\cap\beta((t_1-\eta',t_1+\eta'))=\alpha(u_1)
\]
and
\[
\alpha((u_1-\eta',u_1+\eta'))\subseteq B_{\delta}(\alpha(u_1)).
\]
 Let
\[
D_2=B_\delta(\beta(t,t_1))\cap B_s\setminus (\alpha\cup\beta))\cup\{\alpha(u-\eta),\alpha(u_1+\eta')\}.
\]
Notice that $\alpha(u-\eta)$ and $\alpha(u_1+\eta')$ are in the same path connected component of $D_2$, and pick $\gamma\colon [u-\eta,u_1+\eta']\to D_2$ be a path with $\gamma(u-\eta)=\alpha(u-\eta)$ and $\gamma(u_1+\eta')=\alpha(u_1+\eta')$. Then the path $\alpha'$ given by 
\[
\alpha'=\alpha([0,u-\eta])^{\frown}\gamma^{\frown}\alpha([u_1+\eta',1])
\]
 is an injective path from $x_{\bar i}$ to $y_{\bar i}$ such that $\alpha'((0,1))\subseteq B$, which is disjoint from each $\alpha_i$ from $i>\bar i$, and that has the property that $|\alpha'\cap\beta|\leq|\alpha\cap\beta|-1$, a contradiction. The case $u_1<u$ is treated in the same exact way.

Since both Case 1 and 2 lead to a contradiction, we are in the setting of Case 3. In particular, we showed that $\forall t\in(0,1)$, if $\beta(t)\in\alpha$, then there are $r_1$ and $r_2$ with $r_1<t<r_2$ such that $\beta((r_1,r_2))\subseteq B$ and $\beta(r_1)$ and $\beta(r_2)$ are in two different connected components of $C\setminus\{x_{\bar i},y_{\bar i}\}$.

Recall that $f\colon C\to\mathbb R$ was defined as $f(w)=d(w,z_{\bar j})$. Since the intersection of two different circles has size at most $2$, $|f^{-1}(z)|\leq 2$ for all $z\in\rr$.
\begin{claim}\label{claim1}
Let $r_1$ and $r_2$ be different points in $[0,1]$, and suppose that $\beta(r_1)$ and $\beta(r_2)$ are in $C$. Then one of the two connected components of $C\setminus\{\beta(r_1),\beta(r_2)\}$ is completely contained in $B_{\varepsilon_{\bar j}/2}(z_{\bar j})$.
\end{claim}
\begin{proof}
Since $\beta\subseteq \overline {B_{\varepsilon_{\bar j}/2}(z_{\bar j})}$, then 
\[
f\restriction\beta\cap C\subseteq[0,\varepsilon_{\bar j}/2].
\]
 Let $C_1$ and $C_2$ be the two connected components of $C\setminus\{\beta(r_1),\beta(r_2)\}$, and suppose that there are $w_1\in C_1$ and $w_2\in C_2$ with $z'\geq\varepsilon_{\bar j}/2$ where $z'=\min\{f(w_1),f(w_2)\}$. Let $z=\max\{f(\beta(r_1)),f(\beta(r_2))\}$. Notice that by its definition $f$ has one minimum and one maximum. (The two points of $C$ giving a minimum and a maximum to $f'$ are obtained by looking at the intersection of the line connecting $z_{\bar i}$ to $z_{\bar j}$ with $C$). Therefore $f'$, the derivative of $f$ has only two zeros. But, if $z\leq \varepsilon_{\bar j}/2\leq z'$, then $f'$ has a zero in each of the four connected components of $C\setminus \{w_1,w_2,\beta(r_1),\beta(r_2)\}$, a contradiction.
\end{proof}
Since $|\alpha\cap\beta|\geq 1$, there are $r_1$ and $r_2$ such that $\beta(r_1)$ and $\beta(r_2)$ are in $C$. Since $x_{\bar i}$ and $y_{\bar i}$ are not in $\beta$, then either $x_{\bar i}$ or $y_{\bar i}$ is in the connected component of $C\setminus\{\beta(r_1),\beta(r_2)\}$ whose $f$ value is $<\varepsilon_{\bar j}/2$, by Claim~\ref{claim1}. Without loss of generality, assume that $f(x_{\bar i})<\varepsilon_{\bar j}/2$. 

Let $u=\min\{t\mid \alpha(t)\in\beta\}$, and $t'$ such that $\alpha(u)=\beta(t')$. Let $r_1<t'<r_2$ be such that $\beta(r_1)$ and $\beta(r_2)$ are in two different connected components of $C\setminus\{x_{\bar i},y_{\bar i}\}$ and $\beta((r_1,r_2))\subseteq B$. Let $C_1$ be the connected component of $x_{\bar i}$ in $C\setminus\{\beta(r_1),\beta(r_2)\}$. Since $f(y_{\bar i})>\varepsilon_{\bar j}/2$, $f(C_1)<\varepsilon_{\bar j}/2$, hence $C_1\subseteq B_{\varepsilon_{\bar j}/2}(z_{\bar j})$. Therefore $\alpha([0,u])$ is included in the interior of a connected set delimited by $C_1$ and $\beta([r_1,r_2])$ which is contained in $\overline{B_{\varepsilon_{\bar j}/2}(z_{\bar j})}$, and therefore $\alpha([0,u])\subseteq B_{\varepsilon_{\bar j}/2}(z_{\bar j})$. We are now going to modify $\beta$. Let $\eta>0$ such that $\beta([t'-\eta,t'+\eta])\subseteq B_\delta(\alpha(u))$ and $\beta([t'-\eta,t'+\eta])\cap\alpha([u-\eta,u+\eta])=\alpha(u)$.
Let
\[
D_3=(B_\delta(\alpha([0,u])\cap B_{\varepsilon_{\bar j}/2}(z_{\bar j})\setminus(\alpha\cup\beta))\cup\{\beta(t'-\eta),\beta(t'+\eta)\}.
\]
Then $\beta(t'-\eta)$ and $\beta(t'+\eta)$ are in the same connected component of $D_3$. Let $\gamma\colon[t'-\eta,t'+\eta]\to D_3$ be an injective path such that $\gamma(t'-\eta)=\beta(t'-\eta)$ and $\gamma(t'+\eta)=\beta(t'+\eta)$. Then $\alpha'_{\bar j}=\beta([0,t'-\eta])^{\frown}\gamma^{\frown}\beta([t'+\eta,1])$ is an injective path between $x_{\bar j}$ and $y_{\bar j}$ such that $|\alpha'_{\bar j}\cap\alpha_{\bar i}|\leq |\alpha_{\bar j}\cap\alpha_{\bar i}|-1$, $\alpha'_{\bar j}((0,1))\subseteq B_{\varepsilon_{\bar j}/2}(z_{\bar j})$ and if $i>\bar i$, $\alpha'_{\bar j}\cap\alpha_i=\emptyset$. This is a contradiction.
\end{proof}

\begin{lemma}\label{lemma:minimaldistance}
Let $n \geq 2$ and $m\in\mathbb N$. Let $X$ be a compact convex subset of $\mathbb R^n$ with $\mathring{X}\neq\emptyset$. Let $F=\{x_1,\ldots,x_m\}$ and $G=\{y_1,\ldots,y_m\}$ be finite disjoint subsets of $\mathring{X}$ which are optimally enumerated, and let $\varepsilon>0$. Then there is a homeomorphism $h\colon X\to X$ such that $h(F)=G$ and
\[
\sup_{x\in X}d(h(x),x)<\mathfrak b(F,G)+\varepsilon.
\]
\end{lemma}

\begin{proof}
Let $\alpha_1,\ldots,\alpha_m$ be the paths established in Proposition~\ref{prop:opensets} if $n\geq 3$ or  Proposition~\ref{prop:opensets2} if $n =2$. We can find a positive number $\delta$ small enough such that $\delta<\min\{\varepsilon,\frac{1}{3}\min_{i\neq j}d(\alpha_i,\alpha_j)\}$ and, for each $i$, $U_i=\bigcup_{z\in \alpha_i}B_{\delta}(z)$ is homeomorphic to $(0,1)^n$. We can then find a homeomorphism $h_i\colon \overline{U_i}\to\overline{U_i}$ such that $h_i(x_i)=y_i$ and $h_i(z)=z$ if $z\in\partial {U_i}$. Since the $U_i$'s are disjoint,  $h=\bigcup_{1\le i\le m}h_i$ is the required homeomorphism.
\end{proof}

\begin{theorem} \label{balls}
Let $X\subseteq\rr^n$ be a compact convex subset with $\mathring X\neq\emptyset$. Then $X$ admits approximate continuous transport.
\end{theorem}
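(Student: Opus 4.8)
For $n=1$ a compact convex set with nonempty interior is a closed interval, and the claim is contained in Proposition~\ref{transitive}; so assume $n\ge2$. The plan is to imitate the proof of Proposition~\ref{circles}: approximate $\mu$ and $\nu$ by finitely supported measures carried by the interior $\mathring X$, realise the resulting discrete transport by a homeomorphism via Lemma~\ref{lemma:minimaldistance}, and pass to the limit. Fix $\mu,\nu\in\mathcal{M}_g(X)$. For each $k\in\nn$, use that $\mathring X$ is dense and that $\mu,\nu$ are diffuse and faithful to partition $X$ into finitely many Borel pieces, each of diameter $<\tfrac1k$ and meeting $\mathring X$, and---perturbing the pieces slightly---arrange that $\mu$ and $\nu$ assign to each piece a mass in $\tfrac1{M_k}\nn$ for a common denominator $M_k$. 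Choosing inside each piece the appropriate number of distinct points of $\mathring X$ (in general position, so that the points selected for $\mu$ are disjoint from those selected for $\nu$) produces finite sets $S_k,T_k\subseteq\mathring X$ of size $M_k$ and measures $\mu_k=\tfrac1{M_k}\sum_{t\in T_k}\delta_t$, $\nu_k=\tfrac1{M_k}\sum_{s\in S_k}\delta_s$ with $\delta(\mu,\mu_k)<\tfrac1k$ and $\delta(\nu,\nu_k)<\tfrac1k$ (transport $\mu$, resp.\ $\nu$, within each piece onto the chosen points).

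Next, index $S_k$ and $T_k$ according to the convention~\eqref{bottleneck} and apply Lemma~\ref{lemma:minimaldistance} to $F=S_k$, $G=T_k$ with error $\tfrac1k$, obtaining a homeomorphism $h_k\colon X\to X$ with $h_k(S_k)=T_k$ and $\sup_{x\in X}d(h_k(x),x)<\mathfrak b(S_k,T_k)+\tfrac1k$. Since $\mu_k,\nu_k$ are uniform on $T_k,S_k$ and $h_k$ maps $S_k$ bijectively onto $T_k$, we have $(h_k)_*\nu_k=\mu_k$. Moreover $\mathfrak b(S_k,T_k)=\delta(\nu_k,\mu_k)$ by~\eqref{delta} and $\delta(\nu_k,\mu_k)\le\delta(\nu_k,\nu)+\delta(\nu,\mu)+\delta(\mu,\mu_k)<\delta(\mu,\nu)+\tfrac2k$, so $\sup_x d(h_k(x),x)<\delta(\mu,\nu)+\tfrac3k$. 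In particular $\limsup_k\sup_x d(h_k(x),x)\le\delta(\mu,\nu)$, which is~\eqref{approxtransport}.

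It remains to prove that $(h_k)_*\nu\to\mu$ in the weak-$^*$ topology, and this is the crux of the matter. For $f\in C(X)$, using $(h_k)_*\nu_k=\mu_k$,
\[
\int f\,d\bigl((h_k)_*\nu\bigr)-\int f\,d\mu=\int (f\circ h_k)\,d(\nu-\nu_k)+\int f\,d(\mu_k-\mu).
\]
As $\mu$ and $\mu_k$ are coupled within pieces of diameter $<\tfrac1k$, the second term is at most $\omega_f(\tfrac1k)\to0$, where $\omega_f$ denotes the modulus of continuity of $f$; likewise the first term is at most $\omega_{f\circ h_k}(\tfrac1k)\le\omega_f\bigl(\omega_{h_k}(\tfrac1k)\bigr)$, which tends to $0$ \emph{provided the homeomorphisms $h_k$ admit a common modulus of continuity}. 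This uniform equicontinuity is what implicitly underlies the estimate $\delta((h_k)_*\nu,(h_k)_*\nu_k)\le\delta(\nu,\nu_k)$ used in the proof of Proposition~\ref{circles}. The expected main obstacle is thus to carry out the two constructions above compatibly---choosing the point sets $S_k,T_k$ sufficiently evenly within the pieces, and building the transport tubes of Lemma~\ref{lemma:minimaldistance} thick enough (which the room available in $\rr^n$, $n\ge2$, should permit)---so that the $h_k$ are controlled beyond their sup-distance to the identity. Once this is in place, $(h_k)_*\nu\to\mu$ weak-$^*$, and $X$ admits approximate continuous transport.
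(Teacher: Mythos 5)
Your overall plan (discretise $\mu$ and $\nu$, apply Lemma~\ref{lemma:minimaldistance} to the supports, pass to the limit) is exactly the paper's reduction, and your estimate giving $\limsup_k\sup_x d(h_k(x),x)\le\delta(\mu,\nu)$ is fine. But the step you leave open --- the weak-$^*$ convergence $(h_k)_*\nu\to\mu$ --- is a genuine gap, not a deferrable technicality, and the repairs you sketch cannot be carried out with Lemma~\ref{lemma:minimaldistance} as the tool. The homeomorphism produced by that lemma is the identity outside the union $W_k$ of pairwise disjoint tubes $U_i=\bigcup_{z\in\alpha_i}B_{\rho_k}(z)$. Since the $M_k\to\infty$ points of $S_k$ accumulate in the compact set $X$, the admissible width satisfies $\rho_k<\min_{i\neq j}d(\alpha_i,\alpha_j)\le\min_{i\neq j}d(x_i,x_j)\to0$, so ``thick tubes'' are excluded by disjointness alone. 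Moreover the lemma gives no lower bound on $\nu(W_k)$: for fixed $k$ the tubes shrink, as $\rho_k\to0$, to the finitely many paths $\alpha_i$, so for instance when $\nu$ is normalised Lebesgue measure on $X$ one may perfectly well end up with $\nu(W_k)\to0$, in which case $h_k$ agrees with the identity off a set of vanishing $\nu$-mass and $(h_k)_*\nu\to\nu\neq\mu$: the maps transport the atoms of $\nu_k$ but not the bulk of $\nu$. Equicontinuity is likewise unattainable here: the bottleneck pair has $d(x_{i_0},y_{i_0})=\mathfrak b(S_k,T_k)\to\delta(\mu,\nu)>0$ while $h_k$ fixes $\partial U_{i_0}$, which lies within distance $\rho_k\to0$ of $x_{i_0}$, so the moduli of continuity of the $h_k$ blow up whenever $\mu\neq\nu$.

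In fairness, you have put your finger on precisely the point that the paper compresses into ``together with the same argument as in the proof of Proposition~\ref{circles}''. Note, though, that the circle argument does not rest on equicontinuity either (the logarithmically linear maps also distort small arcs badly); what it really uses is that $h_n$ carries the arc between consecutive atoms of $\nu_n$ onto the arc between the corresponding atoms of $\mu_n$, and by the chopping construction each such arc carries little $\nu$- (resp.\ $\mu$-) mass, so $h_n$ approximately transports \emph{all} of $\nu$, not merely its discretisation. A complete higher-dimensional proof needs homeomorphisms with an analogous all-scales property --- for example maps sending each cell of a fine partition adapted to $\nu$ setwise onto a nearby cell adapted to $\mu$, in the spirit of Oxtoby--Ulam and Fathi, with the bottleneck matching of Lemma~\ref{lemma:minimaldistance} serving only to decide which cell goes where --- rather than maps that merely match the finite supports. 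As written, your proposal establishes the distance bound \eqref{approxtransport} but not the convergence $(h_k)_*\nu\to\mu$ required by Definition~\ref{maindef}, and your own construction, taken literally, can fail it.
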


\begin{proof}
This now follows from Lemma~\ref{finiteapproximation}, Lemma~\ref{lemma:minimaldistance} and the triangle inequality (see \ref{circleargument}).
\end{proof}

\section{Distances between unitary orbits} \label{distance}

In what follows, let $A$ be a simple, tracial, unital $\cs$-algebra and let $X$ be a compact metric space. Denote the unitary group of $A$ by $\mathcal{U}(A)$ and the space of tracial states on $A$ by $T(A)$. Write $\lip(X)$ for the set of 1-Lipschitz functions $X\to[0,\infty)$. If $\tau\in T(C(X))$, denote by $\mu_\tau$ the Borel probability measure on $X$ corresponding under Riesz representation to $\tau$. (That is, $\tau(f) = \int f d\mu_\tau$ for every $f\in C(X)$.) Dually, for a Borel probability measure $\mu$ on $X$, denote by $\tau_\mu$ the trace on $C(X)$ defined by $\tau_\mu(f)=\int f d\mu$.

For two positive elements $a,b\in A$ we say that $a$ is Cuntz subequivalent, or Cuntz below, $b$, and write $a\precsim b$, if there are elements $x_n\in A\otimes\mathcal K$ with $x_n^*bx_n\to a$. The Cuntz relation measures, in some sense, the inclusion of spectral projections (see \cite[\S2]{AraPereraToms.Class}). Recall that a simple, unital, exact $\cs$-algebra $A$ is said to have \emph{strict comparison of positive elements}, or simply \emph{strict comparison}, if $a\precsim b$ whenever $a,b\in A$ are positive elements such that $\lim_{n\to\infty}\tau(a^{\frac{1}{n}}) < \lim_{n\to\infty}\tau(b^{\frac{1}{n}})$ for every $\tau\in T(A)$. (For short, we will wirte $d_\tau(a)$ for $\lim_{n\to\infty}\tau(a^{\frac{1}{n}})$ in what follows.) Strict comparison holds whenever $A$ is $\mathcal{Z}$-stable (see \cite[Corollary 4.6]{Rordam:2004kq}) or, assuming real rank zero and stable rank one, if $K_0(A)$ is weakly unperforated (see \cite[Corollary 6.9.2]{Blackadar:1998qf} and \cite[Corollary 3.10]{Perera:1997zl}).

We introduce distances on the set of equivalence classes, under approximate unitary equivalence, of unital \mbox{$^*$-homomorphisms} $C(X)\to A$.

\begin{definition} \label{metrics}
If $\varphi,\psi\colon C(X)\to A$ are unital $^*$-monomorphisms, define the \emph{unitary distance} as
\begin{align}
d_U(\varphi,\psi) = \inf\{\varepsilon>0 \mid&\; \forall F\subset_{fin} \lip(X)\; \exists u\in\mathcal{U}(A)\; \forall f\in F:\nonumber\\
&\|u\varphi(f)u^*-\psi(f)\| < \varepsilon\}.
\end{align}
In other words,
\begin{equation}
d_U(\varphi,\psi) = \sup_{F\subset_{fin}\lip(X)}\inf_{u\in\mathcal{U}(A)}\sup_{f\in F} \|u\varphi(f)u^*-\psi(f)\|.
\end{equation}
The \emph{Cuntz distance} is defined as 
\begin{align}
d_W(\varphi,\psi) &=\inf\{r>0 \mid \varphi(f_{U}) \precsim \psi(f_{U_r}), \: 
 \psi(f_{U}) \precsim \varphi(f_{U_r}) \: \forall \textrm{ open } U\subseteq X\}.
\end{align}
Here, for an open set $O\subseteq X$, $f_O$ is any positive continuous function whose support is $O$. Notice that the definition of $d_W(\varphi,\psi)$ does not depend on the function $f_O$ one decides to pick.

If $T(A)\neq\emptyset$, the \emph{tracial distance} is defined as
\begin{equation}
\delta(\varphi,\psi)=\sup_{\tau\in T(A)}\delta(\mu_{\varphi^*\tau},\mu_{\psi^*\tau}).
\end{equation}
\end{definition} 
Each of these three distances induces a pseudometric on the space of unital $^*$-monomorphisms $C(X)\to A$. Two morphisms have unitary distance equal to $0$ if and only if they are approximately unitary equivalent. Similarly, they have Cuntz distance zero if and only if the induced maps on the Cuntz semigroups agree, and have tracial distance equal to $0$ when they pull back the same traces.

If $X\subseteq\mathbb{C}$, unital $^*$-monomorphisms $\varphi,\psi\colon C(X)\to A$ correspond to normal elements $x,y\in A$ with spectrum $X$ (namely, $x=\varphi(\id)$ and $y=\psi(\id)$). In this case we may sometimes write $\mu_{\tau,x}$ and $\mu_{\tau,y}$ for $\mu_{\varphi^*\tau}$ and $\mu_{\psi^*\tau}$, and $d_W(x,y)$ and $\delta(x,y)$ for $d_W(\varphi,\psi)$ and $\delta(\varphi,\psi)$. Even though in general $d_U(\varphi,\psi)$ is larger than the distance
\begin{equation}
d_U(x,y) = \inf\{\|uxu^*-y\| \mid u\in\mathcal{U}(A)\}
\end{equation}
between the unitary orbits of $x$ and $y$, we will see in Theorem~\ref{main2} that under suitable hypotheses all of these distances are equal.

\begin{remark}\label{matching}
\begin{enumerate}[label=(\roman*)]
\item The tracial distance $\delta$ is an extension of the spectral distance $\delta$ of \cite{Hiai:1989aa} (which in the context of von Neumann algebras is defined via traces of spectral projections). It is denoted by $D_T$ in \cite{Hu:2015aa} and by $d_P$ in \cite{Jacelon:2014aa}. Actually, $d_P$ and $d_W$ are defined in \cite{Jacelon:2014aa} only for positive contractions and for open sets of the form $(t,1]$. In principle this means that Definition~\ref{metrics} gives larger distances, but under suitable hypotheses (for example those of \cite[Theorem 5.2]{Jacelon:2014aa}) the two versions agree.
\item For normal matrices $a$ and $b$, $\delta(a,b)$ coincides with the bottleneck distance between the two sets of eigenvalues. For self-adjoint matrices, this minimum is attained by listing both sets of eigenvalues in ascending order. For unitary matrices, the minimum can be obtained via anticlockwise orderings of the two sets of eigenvalues (see \cite{Bhatia:1984aa}).
\end{enumerate}
\end{remark}

\begin{lemma} \label{ineq}
Let $A$ be a simple, tracial, unital $\cs$-algebra.
\begin{enumerate}[label=(\roman*)]
\item \label{continuous}
There is a constant $c>0$ such that \[\delta(x,y)\le c \cdot d_U(x,y)\] for every pair of normal elements $x,y\in A$. In particular, $\delta(\cdot,\cdot)$ is uniformly continuous (as a function of approximate unitary equivalence classes of normal elements of $A$).
\item \label{commuting} If $x$ and $y$ are commuting normal elements of $A$ then \[\delta(x,y) \le d_U(x,y).\]
\item \label{dimensionfunction} If $\varphi,\psi\colon C(X)\to A$ are unital $^*$-monomorphisms then \[\delta(\varphi,\psi) \le d_W(\varphi,\psi).\]
\item \label{connected} If $X$ is connected, $A$ is exact and has strict comparison, then for unital $^*$-monomorphisms  $\varphi,\psi\colon C(X)\to A$ we have \[d_W(\varphi,\psi)= \delta(\varphi,\psi).\]
\end{enumerate}
\end{lemma}

\begin{proof}
Building on \cite{Davidson:1986aa}, (i) and (ii) were observed in \cite{Hiai:1989aa} to hold for a semifinite von Neumann algebra $M$ with a faithful normal semifinite trace $\tau$. This includes $\rm{II}_1$ factors, so in particular applies to the weak closure $M_\tau=\pi_\tau(A)''$ of the image of the GNS representation $\pi_\tau$ associated to $\tau\in T(A)$. Since
\[
d_U^A(x,y) = d_U^{\pi_\tau(A)}(\pi_\tau(x),\pi_\tau(y)) \ge d_U^{M_\tau}(\pi_\tau(x),\pi_\tau(y))
\]
and by definition
\[
\delta^A(x,y) = \sup_{\tau\in T(A)} \delta(\mu_{\tau,x},\mu_{\tau,y}) = \sup_{\tau\in T(A)} \delta^{M_\tau}(\pi_\tau(x),\pi_\tau(y))
\]
(where the superscripts indicate the algebra in which the distance should be measured), (i) follows from \cite[Theorem 1.1]{Hiai:1989aa} (which provides a universal constant $c$ such that $\delta^{M_\tau}(\pi_\tau(x),\pi_\tau(y)) \le cd_U^{M_\tau}(\pi_\tau(x),\pi_\tau(y))$) and (ii) follows from \cite[Theorem 2.1(1)]{Hiai:1989aa} (which says that $d_U^{M_\tau}(\pi_\tau(x),\pi_\tau(y)) \ge \delta^{M_\tau}(\pi_\tau(x),\pi_\tau(y))$ if $x$ and $y$ commute).

The relevant facts for (iii) are that dimension functions are order-preserving, that is, \[\varphi(f_{U}) \precsim \psi(f_{U_r}) \implies d_\tau(\varphi(f_{U})) \le d_\tau(\psi(f_{U_r}))\] and also that
\[
d_\tau(\varphi(f_{U})) = \mu_{\varphi^*\tau}(U)
\]
for any $\tau\in T(A)$.

For (iv), it remains to show that $d_W(\varphi,\psi) \le \delta(\varphi,\psi)$. The proof is similar to \cite[Lemma 2]{Cheong:2015aa}. Suppose that $r \geq \delta(\varphi, \psi)$.  Then for every tracial state $\tau \in T(A)$ and every open set $U \subset X$, we have 
\[ \mu_{\varphi^*\tau}(U) \leq \mu_{\psi^*\tau}(U_r), \qquad \mu_{\psi^*\tau}(U)) \leq \mu_{\varphi^*\tau}(U_r),\]
which in turn implies
\[ d_{\tau}(\varphi(f)) \leq d_{\tau}(\psi(f_r)), \qquad d_{\tau}(\psi(f)) \leq d_{\tau}(\varphi(f_r)),\]
where $f$ is any positive function which is nonzero on $U$ and $f_r$ is any positive function which is nonzero on $U_r$.  

Since $A$ has strict comparison and connectedness of $X$ implies that $f$ and $f_r$ are never equivalent to projections, \cite[Proposition 5.9]{PereraToms2007} implies that 
\[ \varphi(f) \precsim \psi(f_r), \qquad \psi(f) \precsim \varphi(f_r),\]
and the result follows.
\end{proof}

\begin{proposition} \label{local}
Let $X$ be a compact metric space, let $A$ be a simple, tracial, unital $\cs$-algebra. Then, for any pair of unital $^*$-monomorphisms $\varphi,\psi\colon C(X)\to A$, we have
\begin{equation} \label{pointwise}
\delta(\varphi,\psi) = \sup_{f\in\lip(X)}\delta(\varphi(f),\psi(f)).
\end{equation}
\end{proposition}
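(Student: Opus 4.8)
The plan is to prove the two inequalities $\delta(\varphi,\psi)\le\sup_{f\in\lip(X)}\delta(\varphi(f),\psi(f))$ and $\delta(\varphi,\psi)\ge\sup_{f\in\lip(X)}\delta(\varphi(f),\psi(f))$ separately, working one trace $\tau\in T(A)$ at a time. Unwinding the definitions, for a fixed $\tau$ the left side localises to $\delta(\mu_{\varphi^*\tau},\mu_{\psi^*\tau})$, a bottleneck-type distance between Borel probability measures on $X$, while for $f\in\lip(X)$ the element $\varphi(f)$ is normal with spectrum $f(X)\subseteq\mathbb{C}$ and (by functional calculus and the Riesz correspondence) $\mu_{\tau,\varphi(f)}=f_*\mu_{\varphi^*\tau}$, the pushforward of $\mu_{\varphi^*\tau}$ along $f$. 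So the proposition reduces to the purely measure-theoretic identity
\[
\delta(\mu,\nu)=\sup_{f\in\lip(X)}\delta(f_*\mu,f_*\nu)
\]
for $\mu,\nu$ Borel probability measures on the compact metric space $X$, after which one takes $\sup_{\tau\in T(A)}$ on both sides; the sup over $\tau$ commutes with the sup over $f$ in the obvious way.

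For the inequality $\delta(f_*\mu,f_*\nu)\le\delta(\mu,\nu)$ (hence ``$\ge$'' in \eqref{pointwise} after taking suprema), I would argue directly from the $\epsilon$-neighbourhood definition of $\delta$: if $r>\delta(\mu,\nu)$ and $V\subseteq f(X)$ is Borel, set $U=f^{-1}(V)$; since $f$ is $1$-Lipschitz, $f^{-1}(V_r)\supseteq U_r$ (if $d(f(x),V)<r$, there is $v\in V$ with $|f(x)-v|<r$, and picking $x'\in f^{-1}(v)$ with $d(x,x')$ arbitrarily close to $d(x,f^{-1}(v))$ shows $x\in\overline{U_r}$ — one must be a little careful here, working with closed neighbourhoods or with open $U$ and tightness as the paragraph after \eqref{delta} permits), so $f_*\mu(V)=\mu(U)\le\nu(U_r)\le\nu(f^{-1}(V_r))=f_*\nu(V_r)$, and symmetrically. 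Hence $\delta(f_*\mu,f_*\nu)\le r$ for all such $r$.

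The reverse inequality $\delta(\mu,\nu)\le\sup_{f\in\lip(X)}\delta(f_*\mu,f_*\nu)$ is the main obstacle. The idea is: given a Borel (or, by tightness, open) set $U\subseteq X$ with, say, $\mu(U)>\nu(U_r)$, I must manufacture a $1$-Lipschitz $f:X\to\mathbb{C}$ witnessing $\delta(f_*\mu,f_*\nu)\ge r$. The natural candidate is the real-valued function $f(x)=\dist(x,X\setminus U)=d(x,U^c)$, which is $1$-Lipschitz, vanishes exactly off $U$, and satisfies $f^{-1}((0,\infty))=U$ and more precisely $f^{-1}((r,\infty))\subseteq\{x: d(x,U^c)>r\}$; I then test $\delta(f_*\mu,f_*\nu)$ against the set $(0,\infty)\cap f(X)$ or an appropriate truncation, using that the points of $f_*\mu$-mass sitting above level $s$ correspond to the ``$s$-interior'' of $U$. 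Some bookkeeping is needed because $f_*\mu$ and $f_*\nu$ live on an interval in $\mathbb{R}$ and the $r$-neighbourhood of a half-line $(s,\infty)$ in $\mathbb{R}$ is $(s-r,\infty)$, so one has to relate $\mu(U)$ to $f_*\mu((0,\infty))$ and $\nu(U_r)$ to $f_*\nu$ of a shifted half-line; this is where matching the metric on $X$ to the Euclidean metric on the target is delicate, and one may need to pass to $r'<r$ and let $r'\uparrow r$. A cleaner alternative, which I would pursue if the direct computation gets messy, is to exploit that $\delta$ on $\mathcal M(X)$ is already a supremum over Borel sets and to note that the single bump function above suffices to recover the constraint ``$\mu(U)\le\nu(U_r)$'' in the image; since $U$ was arbitrary this yields $\delta(\mu,\nu)\le\sup_f\delta(f_*\mu,f_*\nu)$. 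Finally, reassembling: taking $\sup_{\tau\in T(A)}$ turns the measure identity into \eqref{pointwise}, noting that on the right $\delta(\varphi(f),\psi(f))=\sup_\tau\delta(f_*\mu_{\varphi^*\tau},f_*\mu_{\psi^*\tau})$ by Definition~\ref{metrics} applied to the normal elements $\varphi(f),\psi(f)$.
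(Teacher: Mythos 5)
Your overall architecture is the paper's: reduce, via $\mu_{\tau,\varphi(f)}=f_*\mu_{\varphi^*\tau}$, to the measure-theoretic identity $\delta(\mu,\nu)=\sup_{f\in\lip(X)}\delta(f_*\mu,f_*\nu)$ and then take suprema over $\tau$. The easy inequality is essentially right, though your parenthetical justification argues the wrong way round: what you need is $U_r\subseteq f^{-1}(V_r)$, which is one line (if $x\in U_r$, pick $x'\in U=f^{-1}(V)$ with $d(x,x')<r$; then $|f(x)-f(x')|\le d(x,x')<r$ and $f(x')\in V$, so $f(x)\in V_r$), whereas you try to place $f^{-1}(V_r)$ inside $\overline{U_r}$, which is neither needed nor true in general. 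You also never address that $\lip(X)$ consists of \emph{contractions}, so one should first rescale the metric so that $\diam(X)\le 1$, as the paper does.

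The genuine gap is in the hard direction, and it is not ``bookkeeping''. Your witness $f(x)=d(x,X\setminus U)$ vanishes identically on $X\setminus U$, so $f_*\nu$ records only the $\nu$-measures of the erosions $\{x: d(x,X\setminus U)>s\}\subseteq U$ and lumps all of $\nu(X\setminus U)$ --- in particular all of $\nu(U_r\setminus U)$ --- at the single point $0$. Hence no choice of test set in the image can recover the constraint $\mu(U)\le\nu(U_r)$ for this same $U$, and the claim in your ``cleaner alternative'' is false. Concretely, take $X=[0,3]$, $U=(1,2)$, $r=0.9$, $\mu$ uniform on $(1,1.05)$ and $\nu$ uniform on $(2.95,3)$: then $\mu(U)=1>0=\nu(U_r)$, yet $f_*\nu=\delta_0$ and $f_*\mu$ is supported in $[0,0.05)$, so $\delta(f_*\mu,f_*\nu)\le 0.05\ll r$; this $f$ detects nothing. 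The paper instead uses the distance \emph{to the set}, $g_O(x)=d(x,\bar O)$ for open $O$, whose sublevel sets are exactly the enlargements, $g_O^{-1}([0,t))=O_t$; testing the intervals $[0,t)$ (whose $r$-neighbourhood lies in $[0,t+r)$) and letting $t\downarrow 0$ gives $\mu(O)\le\nu(O_{r'})$ for every $r'>r>\sup_f\delta(f_*\mu,f_*\nu)$, which is what is needed. Alternatively, the erosion inequality your $f$ does yield, $\mu(\{x: d(x,X\setminus U)>r''\})\le\nu(U)$ for $r''>\sup_f\delta(f_*\mu,f_*\nu)$, would suffice if you applied it with $U$ replaced by $V_{r'}$ for an arbitrary open $V$ and $r''<r'$, since every point of $V$ lies at distance at least $r'$ from $X\setminus V_{r'}$; but that substitution is precisely the missing step, and it is the crux of the proof.
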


\begin{proof}
For a metric space $Y$, $O\subseteq Y$ open and $\mu,\nu\in \mathcal{M}(Y)$, write
\[
\delta_O(\mu,\nu) = \inf\{r>0 \mid \mu(O) \le \nu(O_r), \: \nu(O) \le \mu(O_r)\},
\]
so that
\[
\delta(\mu,\nu)=\sup_{O\subseteq Y}\delta_O(\mu,\nu).
\]
For every $f\in\lip(X)$, every open set $U\subseteq f(X)$ and every $r>0$ we have $f^{-1}(U)_r \subseteq f^{-1}(U_r)$. Therefore,
\begin{align*}
\delta(\varphi(f),\psi(f)) &= \sup_{\tau\in T(A)} \delta(\mu_{\tau,\varphi(f)},\mu_{\tau,\psi(f)})\\
&= \sup_{\tau\in T(A)} \delta(f_*\mu_{\varphi^*\tau},f_*\mu_{\psi^*\tau})\\
&= \sup_{\tau\in T(A)} \sup_{U\subseteq f(X)} \delta_{U}(\mu_{\varphi^*\tau}\circ f^{-1}, \mu_{\psi^*\tau}\circ f^{-1})\\
&\le \sup_{\tau\in T(A)} \sup_{U\subseteq f(X)} \delta_{f^{-1}(U)}(\mu_{\varphi^*\tau}, \mu_{\psi^*\tau})\\
&\le \sup_{\tau\in T(A)} \sup_{O \subseteq X} \delta_{O}(\mu_{\varphi^*\tau}, \mu_{\psi^*\tau})\\
&=\delta(\varphi,\psi).
\end{align*}
For the reverse inequality, observe that for every open subset $O\subseteq X$  the function $g_0 \colon X\to\mathbb{C}$ defined by $g_O(x)=d(x,\bar{O})$ is in $\lip(X)$. Therefore,
\begin{align*}
\delta(\varphi,\psi) &= \sup_{\tau\in T(A)} \sup_{O\subseteq X} \delta_{O} (\mu_{\varphi^*\tau},\mu_{\psi^*\tau})\\
&= \sup_{\tau\in T(A)} \sup_{O\subseteq X} \sup_{t>0} \delta_{O_t} (\mu_{\varphi^*\tau},\mu_{\psi^*\tau})\\
&= \sup_{\tau\in T(A)} \sup_{O\subseteq X} \sup_{t>0} \delta_{[0,t)}(\mu_{\varphi^*\tau}\circ g_O^{-1},\mu_{\varphi^*\tau}\circ g_O^{-1})\\
&\le \sup_{\tau\in T(A)} \sup_{O\subseteq X} \sup_{U\subseteq[0,\infty]} \delta_{U}(\mu_{\varphi^*\tau}\circ g_O^{-1},\mu_{\varphi^*\tau}\circ g_O^{-1})\\
&= \sup_{\tau\in T(A)} \sup_{O\subseteq X} \delta(\mu_{\tau,\varphi(g_O)},\mu_{\tau,\psi(g_O)})\\
&\le \sup_{\tau\in T(A)}  \sup_{f\in\lip(X)}\delta(\mu_{\tau,\varphi(f)},\mu_{\tau,\psi(f)})\\
&= \sup_{f\in\lip(X)}\delta(\varphi(f),\psi(f)). \qedhere
\end{align*}
\end{proof}

\begin{corollary} \label{continuitytrick}
Let $A$ be a simple, tracial, unital $\cs$-algebra and let $X$ be a compact metric space. Then $\delta(\cdot,\cdot)$ is uniformly continuous with respect to $d_U$ (as a function defined on approximate unitary equivalence classes of unital $^*$-monomorphisms $C(X)\to A$).
\end{corollary}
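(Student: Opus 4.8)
The plan is to combine the two results established immediately above: the pointwise formula for $\delta$ from Proposition~\ref{local}, and the uniform continuity of the spectral distance on normal elements from Lemma~\ref{ineq}(\ref{continuous}). First I would fix unital $^*$-monomorphisms $\varphi,\psi\colon C(X)\to A$ and apply Proposition~\ref{local} to rewrite
\[
\delta(\varphi,\psi) = \sup_{f\in\lip(X)}\delta(\varphi(f),\psi(f)),
\]
and similarly for a second pair $\varphi',\psi'$. Taking a difference of suprema, it suffices to bound $|\delta(\varphi(f),\psi(f)) - \delta(\varphi'(f),\psi'(f))|$ uniformly in $f\in\lip(X)$ by something controlled by $d_U(\varphi,\varphi')$ and $d_U(\psi,\psi')$.

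Next I would invoke the triangle inequality for the spectral distance $\delta$ on normal elements (which holds since $\delta$ is a metric on approximate unitary equivalence classes, being a sup over $\tau$ of genuine metrics $\delta(\mu_{\tau,\cdot},\mu_{\tau,\cdot})$ on measures), giving
\[
|\delta(\varphi(f),\psi(f)) - \delta(\varphi'(f),\psi'(f))| \le \delta(\varphi(f),\varphi'(f)) + \delta(\psi(f),\psi'(f)).
\]
Then Lemma~\ref{ineq}(\ref{continuous}) supplies a universal constant $c$ with $\delta(\varphi(f),\varphi'(f)) \le c\, d_U(\varphi(f),\varphi'(f))$, and the latter is at most $c\,d_U(\varphi,\varphi')$ since $f\in\lip(X)$ is among the test functions appearing in the definition of $d_U(\varphi,\varphi')$ (for the singleton family $F=\{f\}$, any unitary $u$ with $\|u\varphi(g)u^*-\varphi'(g)\|$ small for all $g$ in a larger finite set in particular works for $g=f$; so $d_U(\varphi(f),\varphi'(f)) \le d_U(\varphi,\varphi')$). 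Combining, $|\delta(\varphi,\psi) - \delta(\varphi',\psi')| \le c\big(d_U(\varphi,\varphi') + d_U(\psi,\psi')\big)$, which is the desired uniform continuity.

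The only genuinely delicate point — and the step I would be most careful about — is the inequality $d_U(\varphi(f),\varphi'(f)) \le d_U(\varphi,\varphi')$ relating the one-variable unitary distance between normal elements to the $^*$-homomorphism version of $d_U$: one must check that the definition of $d_U(\varphi,\varphi')$ as a $\sup$ over finite subsets $F\subseteq\lip(X)$ of an $\inf$ over unitaries indeed dominates the single-function quantity, which follows because $\{f\}$ is itself an admissible finite subset (recalling that $f(X)$ has diameter at most $\diam(X)$, one may need to rescale so that $f$ or a constant multiple lands in $\lip(X)$; since $\delta$ and $d_U$ scale linearly this changes nothing). Everything else is the formal assembly of Proposition~\ref{local} and Lemma~\ref{ineq}(\ref{continuous}) via the triangle inequality, with the constant $c$ inherited verbatim from Lemma~\ref{ineq}.
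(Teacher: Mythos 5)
Your proposal is correct and follows essentially the same route as the paper: the key estimate, obtained from Lemma~\ref{ineq}(\ref{continuous}) applied to $\varphi(f),\varphi'(f)$ for a single $f\in\lip(X)$ (using that $\{f\}$ is an admissible finite test set for $d_U(\varphi,\varphi')$) and then assembled over all $f$ via Proposition~\ref{local}, is exactly what the paper does. The only cosmetic difference is that the paper proves the one-variable statement (``$d_U(\varphi,\varphi')$ small implies $\delta(\varphi,\varphi')$ small,'' which is the form used later) and leaves the triangle inequality for $\delta$ implicit, whereas you spell it out to get the two-variable bound $|\delta(\varphi,\psi)-\delta(\varphi',\psi')|\le c\,(d_U(\varphi,\varphi')+d_U(\psi,\psi'))$; your worry about rescaling is unnecessary since you already take $f\in\lip(X)$.
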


\begin{proof}
Let $\varphi\colon C(X)\to A$ be a unital $^*$-monomorphism and let $\varepsilon>0$. Let $c$ be as in Lemma~\ref{ineq}(\ref{continuous}). Suppose that $\varphi'\colon C(X)\to A$ is a unital $^*$-monomorphism with $d_U(\varphi,\varphi')<\frac{\varepsilon}{2c}$. Then, in particular, for every function $f\in\lip(X)$ there is a unitary $u_f\in A$ such that $\|u_f\varphi(f)u_f^*-\varphi'(f)\|<\frac{\varepsilon}{2c}$. Lemma~\ref{ineq}\ref{continuous} implies that $\delta(\varphi(f),\varphi'(f)) < \frac{\varepsilon}{2}$ for every $f\in\lip(X)$ and so by Proposition~\ref{local}, $\delta(\varphi,\varphi') < \varepsilon$.
\end{proof}

\begin{corollary} \label{commutingtrick}
Let $A$ be a simple, tracial, unital $\cs$-algebra and let $X$ be a compact metric space. Suppose that $\varphi,\psi\colon C(X)\to A$ are two unital $^*$-monomorphisms with commuting images. Then $\delta(\varphi,\psi) \le d_U(\varphi,\psi)$.
\end{corollary}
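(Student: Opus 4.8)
The plan is to reduce Corollary~\ref{commutingtrick} to the single-operator statement of Lemma~\ref{ineq}(\ref{commuting}) by passing through the pointwise description of the tracial distance furnished by Proposition~\ref{local}. The key observation is that $\delta(\varphi,\psi) = \sup_{f\in\lip(X)}\delta(\varphi(f),\psi(f))$, so it suffices to bound each $\delta(\varphi(f),\psi(f))$ by $d_U(\varphi,\psi)$, uniformly in $f\in\lip(X)$.

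First I would fix $f\in\lip(X)$ and set $x=\varphi(f)$, $y=\psi(f)$. Since $\varphi$ and $\psi$ have commuting images, $x$ and $y$ are commuting normal elements of $A$, so Lemma~\ref{ineq}(\ref{commuting}) applies and gives $\delta(x,y)\le d_U(x,y)$. Next I would argue that $d_U(\varphi(f),\psi(f))\le d_U(\varphi,\psi)$ for every $f\in\lip(X)$: indeed, if $u\in\mathcal U(A)$ satisfies $\|u\varphi(f)u^*-\psi(f)\|<\varepsilon$ for the single function $f$ (which is exactly what is witnessed for any finite set $F\ni f$ in the definition of $d_U(\varphi,\psi)$), then the same $u$ witnesses $\|uxu^*-y\|<\varepsilon$. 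Hence $d_U(\varphi(f),\psi(f))\le d_U(\varphi,\psi)$. Combining, $\delta(\varphi(f),\psi(f))\le d_U(\varphi(f),\psi(f))\le d_U(\varphi,\psi)$ for each $f\in\lip(X)$, and taking the supremum over $f$ together with Proposition~\ref{local} yields $\delta(\varphi,\psi)\le d_U(\varphi,\psi)$.

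The only mild subtlety — and the step I would be most careful about — is the inequality $d_U(\varphi(f),\psi(f))\le d_U(\varphi,\psi)$, which compares the ``one-function'' version of the unitary distance (the classical $d_U(x,y)$ of orbit distance for normal elements) with the $\sup$-over-finite-subsets version $d_U(\varphi,\psi)$ in Definition~\ref{metrics}. This goes through because the finite-subset supremum is over \emph{all} finite $F\subset\lip(X)$, in particular over the singletons $F=\{f\}$, so $d_U(\varphi,\psi)\ge \inf_{u}\|u\varphi(f)u^*-\psi(f)\| = d_U(\varphi(f),\psi(f))$ directly from the $\sup$–$\inf$ formula. No approximation or perturbation argument is needed; once Proposition~\ref{local} and Lemma~\ref{ineq}(\ref{commuting}) are in hand, the corollary is essentially a bookkeeping exercise.
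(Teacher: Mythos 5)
Your argument is correct and follows essentially the same route as the paper: both reduce to the pointwise identity of Proposition~\ref{local}, apply Lemma~\ref{ineq}(\ref{commuting}) to the commuting normal elements $\varphi(f),\psi(f)$, and then observe that the singleton sets $F=\{f\}$ in the definition of $d_U(\varphi,\psi)$ give $d_U(\varphi(f),\psi(f))\le d_U(\varphi,\psi)$ (the paper phrases this same step as the interchange $\sup\inf\le\inf\sup$ over finite subsets). No gaps; nothing further is needed.
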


\begin{proof}
By Proposition~\ref{local} and Lemma~\ref{ineq}\ref{commuting} we have
\begin{align*}
\delta(\varphi,\psi) &= \sup_{f\in\lip(X)} \delta(\varphi(f),\psi(f))\\
&\le \sup_{f\in\lip(X)} \inf_{u\in \mathcal{U}(A)} \|u\varphi(f)u^*-\psi(f)\|\\
&= \sup_{F\subset_{fin} \lip(X)} \sup_{f\in F} \inf_{u\in\mathcal{U}(A)} \|u\varphi(f)u^*-\psi(f)\|\\
&\le \sup_{F\subset_{fin} \lip(X)} \inf_{u\in\mathcal{U}(A)} \sup_{f\in F} \|u\varphi(f)u^*-\psi(f)\|\\
&= d_U(\varphi,\psi). \qedhere
\end{align*}
\end{proof}

\section{The classification machine} \label{mainsec}

In this section we use classification machinery to convert transport maps in the commutative $\cs$-algebra $C(X)$ into conjugating unitaries in a well-behaved $\cs$-algebra $A$. To do so, we will use some (basic) $KL$-theory, where $KL$ is $K$-theory with coefficients. For a reference to the main definitions and properties see \cite[\S2.4]{Rordam.Class}. The following describes domains and targets that are amenable to this strategy.

\begin{definition}
Let $\mathcal{C}_1$ denote the class of infinite-dimensional, separable, simple, unital, exact $\cs$-algebras with real rank zero, stable rank one, weakly unperforated $K_0$ and finitely many extremal tracial states (which we will typically write as $\tau_1,\ldots,\tau_m$). Let $\mathcal{C}_2$ denote the class of separable, simple, unital, exact, $\mathcal{Z}$-stable $\cs$-algebras with a unique tracial state (which we will typically write as $\tau$).
\end{definition}

\begin{remark}
Recall that a separable, unital $\cs$-algebra $A$ is called $\mathcal{Z}$-\emph{stable} if either of the following equivalent conditions holds:
\begin{enumerate}[label=(\roman*)]
\item \label{tensor} $A\cong A\otimes\mathcal{Z}$;
\item \label{central} for any finite subsets $F\subseteq A$ and $G\subseteq\mathcal{Z}$, and any $\varepsilon>0$, there is a unital $(G,\varepsilon)$-multiplicative $^*$-linear map $\varphi\colon \mathcal{Z}\to A$ such that $\|[\varphi(b),a]\|<\varepsilon$ for every $a\in F$ and $b\in G$.
\end{enumerate}
(See \cite[Theorem 7.2.2]{Rordam.Class} and \cite[Theorem 2.2]{Toms:2007uq}.) Outside of the separable setting, as in Theorem~\ref{nonseparable}, the right notion of $\mathcal{Z}$-stability is \ref{central} (see \cite[Theorem 4.1]{Farah:2014aa}).
\end{remark}

\begin{definition}\label{def:Kcontr}
Let $X$ be a compact connected metric space. Recall that $X$ is \emph{$K$-contractible} if it has the same $K$-theory as a point. Let us say that $X$ is \emph{$K$-planar} if it has the $K$-theory of a subset of the plane, so $K^0(X)\cong\mathbb{Z}$ and $K^1(X)$ is free and abelian---see \cite[Proposition 7.5.2]{Higson:2000to}.
\end{definition}

\begin{remark}\label{free}
Typically, uniqueness results for unital $^*$-monomorphisms $\varphi,\psi\colon C(X)\to A$ are of the following form: $\varphi$ and $\psi$ are approximately unitarily equivalent if and only if 
\begin{enumerate}[label=(\roman*)]
\item $KL(\varphi)=KL(\psi)$,
\item $\varphi$ and $\psi$ agree on algebraic $K_1$, and
\item $\tau\circ\varphi = \tau\circ\psi$ for every tracial state $\tau\in T(A)$.
\end{enumerate}
The assumptions we make on $X$, $A$, $\varphi$ and $\psi$ ensure that agreement on the $K$-theoretic part of the invariant (detailed descriptions of which can be found in \cite[\S2 and \S3]{Matui:2011uq}) holds automatically. More precisely:
\begin{items}
\item if $X$ is $K$-planar then $KL(\varphi)=KL(\psi)$ if and only if  $K_*(\varphi)=K_*(\psi)$, which is immediate if $\varphi$ and $\psi$ are unital and $K_1$-trivial;
\item algebraic $K_1$ becomes redundant if $A$ has real rank zero or $K^1(X)=0$.
\end{items}
In the sequel, this allows us to isolate and fine-tune the measure-theoretic part of the invariant.
\end{remark}

First, we adapt some existence and uniqueness results from the literature to suit our needs. We have found it convenient to use \cite{Matui:2011uq} as a reference, but this is an ongoing research area and there are many other relevant articles, for example \cite{Gong:2000ys}.

\begin{proposition} \label{existence}
Let $X$ be a compact connected $K$-contractible metric space and let $\mu$ be a fully supported Borel probability measure on $X$. Then, for every $A\in\mathcal{C}_2$, there is a unital $^*$-monomorphism $\varphi\colon C(X)\to A$ with $\varphi^*\tau=\tau_\mu$.
\end{proposition}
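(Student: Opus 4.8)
The plan is to construct $\varphi$ as a point-evaluation-type map composed with an embedding of $C(X)$ into a model $\cs$-algebra of the right form, using classification to transfer the existence of such a map into $A$. Concretely, because $X$ is $K$-contractible, the unital inclusion $\cc=C(\mathrm{pt})\hookrightarrow C(X)$ is a $KK$-equivalence (by the universal coefficient theorem, since $K_*(C(X))\cong K_*(\cc)$ with the generator of $K_0$ going to the generator), so at the level of the Elliott invariant the only remaining datum to match is the tracial one. First I would reduce to producing \emph{some} unital $^*$-monomorphism $\varphi_0:C(X)\to A$, and then correct its trace.

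For existence of a unital embedding with prescribed trace, I would use a two-step argument. Step one: embed $C(X)$ unitally into a simple, separable, unital, exact, $\mathcal{Z}$-stable $\cs$-algebra $B$ with unique trace whose pairing with $\mu$ is realized — e.g.\ take $B$ to be a suitable tracial-model building block, or $B = C(X)\otimes\mathcal{Z}$ localized appropriately; since $X$ is path-connected $K$-contractible, $C(X)$ has trivial $K_1$-image and $B$ can be arranged in $\mathcal{C}_2$ with $\tau_B$ pulling back along the inclusion to $\tau_\mu$. Step two: since $B\in\mathcal{C}_2$ and $A\in\mathcal{C}_2$ and both have unique trace, classification (the UCT + $\mathcal{Z}$-stability classification theorem, together with the fact that every trace-preserving embedding of Elliott invariants is realized by a $^*$-homomorphism in the $\mathcal{Z}$-stable setting, cf.\ the existence half of e.g.\ \cite{Matui:2011uq} or the standard stably-finite existence theorems) yields a unital $^*$-homomorphism $\iota:B\to A$ with $\tau\circ\iota=\tau_B$. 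Injectivity of $\iota$ is automatic from simplicity of $B$. Setting $\varphi=\iota\circ(\text{inclusion }C(X)\hookrightarrow B)$ gives a unital $^*$-monomorphism with $\varphi^*\tau=\tau_\mu$.

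The place where I expect to spend the most care is the first step: producing the model $B\in\mathcal{C}_2$ together with an honest unital embedding of $C(X)$ into it realizing the measure $\mu$. One clean route is: form $C(X)\otimes\mathcal{Z}$, which is separable, unital, exact, $\mathcal{Z}$-stable but not simple; then use that $\mu$ faithful and diffuse lets one pass to a simple quotient or inductive-limit object — more precisely, realize $\tau_\mu$ as a faithful trace and use a crossed-product or groupoid construction, or simply invoke that $C(X)\hookrightarrow C(X)\otimes\mathcal{Z}\hookrightarrow \mathcal{W}$-type absorbing arguments — to land inside a simple $\mathcal{Z}$-stable algebra with unique trace extending $\tau_\mu$ on the image. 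Alternatively, and perhaps most economically, since $X$ is $K$-contractible one may bypass models entirely: choose any unital $^*$-monomorphism $\psi:C(X)\to A$ whatsoever (available because $A$ is infinite-dimensional simple unital, so contains a unital copy of $C(X)$ via a diffuse normal element whenever $X$ embeds suitably, using real rank / spectrum flexibility), and then apply an existence theorem to \emph{modify} $\psi$ within its approximate unitary equivalence class — or rather within the class of maps with the same $KL$-data, which by $K$-contractibility is automatic — so as to achieve $\varphi^*\tau=\tau_\mu$. The enabling input here is that in $\mathcal{C}_2$, for a $K$-contractible domain, the set of attainable traces on $C(X)$ is exactly all of $T(C(X))$, i.e.\ every faithful trace is realized; this is the content one extracts from the existence side of classification.

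A subtlety worth flagging: $\mu$ is only assumed fully supported (faithful), not diffuse, so $X$ need not be a Peano continuum and the point masses could carry weight only if $X$ had isolated points — but $K$-contractible path-connected $X$ with an isolated point is a point, a degenerate case one checks separately (there $C(X)=\cc$ and $\varphi$ is the unit inclusion, with $\tau_\mu$ the unique trace). In the non-degenerate case, faithfulness of $\mu$ suffices to guarantee that the resulting embedding is a monomorphism and that the trace it induces is faithful, which is what keeps us inside $\mathcal{C}_2$-compatible data throughout.
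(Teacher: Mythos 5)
Your proposal has a genuine gap: both of its routes ultimately rest on an existence theorem that is not available in the setting of $\mathcal{C}_2$, and in your ``most economical'' variant the appeal is circular. The assertion that ``the set of attainable traces on $C(X)$ is exactly all of $T(C(X))$ \dots\ this is the content one extracts from the existence side of classification'' is precisely the statement being proved; the existence results one could actually cite (e.g.\ \cite[Theorem 2.6]{Matui:2011uq}, going back to \cite{Ng:2008aa}) require real rank zero, which algebras in $\mathcal{C}_2$ need not have, and the full UCT/$\mathcal{Z}$-stability classification theorem requires nuclearity, whereas $A\in\mathcal{C}_2$ is only assumed exact (cf.\ Remark~\ref{nonnuclear}). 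For the same reason your step two --- a unital trace-preserving $^*$-homomorphism from a nuclear model $B$ into $A$ ``by classification'' --- is unjustified, and your step one never actually produces $B$: ``$C(X)\otimes\mathcal{Z}$ localized appropriately'' and ``a crossed-product or groupoid construction'' are placeholders, not constructions. Even the starting point, the existence of \emph{some} unital $^*$-monomorphism $C(X)\to A$, is not obvious for a general compact path-connected $K$-contractible $X$: your ``diffuse normal element'' argument only makes sense for $X\subseteq\mathbb{C}$.

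The paper gets around all of this with two concrete ingredients missing from your outline. First, for locally connected $X$ (a Peano continuum) the trace is hit exactly without any classification existence theorem: R{\o}rdam's unital embedding of $C[0,1]$ into $1\otimes\mathcal{Z}\subseteq A\otimes\mathcal{Z}\cong A$ carrying $\tau$ to Lebesgue measure \cite[Theorem 2.1]{Rordam:2004kq} is composed with $\pi^*$ for a continuous surjection $\pi:[0,1]\to X$ satisfying $\pi_*\lambda=\mu$, supplied by Kolesnikov \cite{Kolesnikov:1999aa}; this is where faithfulness of $\mu$ is used. Second, for general (possibly non-locally-connected) $X$ one writes $X=\varprojlim(X_i,f_i)$ with finite simplicial complexes $X_i$ and surjective bonding maps, applies the Peano-case construction to each $C(X_i)$ with the pushed-forward measures, and then uses $K$-contractibility (after passing to a subsequence so that the $K$-theoretic data of consecutive maps agree) together with Matui's uniqueness theorem \cite[Theorem 6.6]{Matui:2011uq} to show the two compositions into $A$ are approximately unitarily equivalent, producing an approximate intertwining that induces $\varphi$ with $\varphi^*\tau=\tau_\mu$. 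Without arguments playing the role of these two steps, your proposal does not yield the proposition.
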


\begin{proof}
If $X$ is locally connected, the argument is relatively easy. By \cite[Theorem 2.1]{Rordam:2004kq} there is a unital $^*$-monomorphism
\[
\varphi'\colon C([0,1]) \hookrightarrow 1\otimes\mathcal{Z} \hookrightarrow A\otimes\mathcal{Z} \cong A
\]
such that $\tau\circ\varphi'=\tau_\lambda$, where $\lambda$ is Lebesgue measure on $[0,1]$. By \cite[Theorem 1]{Kolesnikov:1999aa} there is a continuous surjection $\pi\colon [0,1]\to X$ with $\pi_*\lambda=\mu$. Then $\varphi=\varphi'\circ\pi^*\colon C(X)\to A$ is the required map.

For the general case we argue as follows. It is well known that the compact connected metric space $X$ may be written as an inverse limit $X=\varprojlim (X_i,f_i)$, where each $X_i$ is a finite simplicial complex and each continuous function $f_i\colon X_{i+1}\to X_i$ is surjective (see \cite[Corollary 4.10.11]{Sakai:2013aa}). Dually, $C(X)\cong \varinjlim (C(X_i),\theta_i)$, where $\theta_i=f_i^*$. Since by assumption $K_0(C(X))=\langle[1]\rangle=\mathbb{Z}$ and $K_1(C(X))=0$, we may assume by passing to a subsequence that, for every $i$, $K_1(\theta_i)=0$ and there are generating sets $S_i=\{[1]=s_1^i,\ldots,s_{n_i}^i\}\subseteq K_0(C(X_i))$ such that $K_0(\theta_i)(s_j^i)=0$ for $j>1$.

For each $i$, the pushforward $\mu_i$ of $\mu$ to $X_i$ is a fully supported Borel probability measure. Since each $X_i$ is a Peano continuum, by the first part of the proof there are unital $^*$-monomorphisms $\varphi_i\colon C(X_i)\to A$ with $\varphi_i^*\tau=\tau_{\mu_i}$. Then the maps $\Phi_i=\varphi_{i+2}\circ\theta_{i+1}\circ\theta_i$ and $\Psi_i=\varphi_{i+1}\circ\theta_i$ are such that
\begin{enumerate}[label=(\roman*)]
\item $KL(\Phi_i)=KL(\Psi_i)$ (or equivalently in this setting, $K_*(\Phi_i)=K_*(\Psi_i)$),
\item $\Phi_i$ and $\Psi_i$ agree on algebraic $K_1$ (because by construction they both factor through $C([0,1])$, which has trivial $K_1$), and
\item $\tau\circ\Phi_i = \tau\circ\Psi_i \:(=\tau_{\mu_i})$.
\end{enumerate}
By \cite[Theorem 6.6]{Matui:2011uq}, we therefore have that $\Phi_i$ and $\Psi_i$ are approximately unitarily equivalent. It follows that the diagram
\[
\begin{tikzcd}
\cdots \arrow[r] & C(X_i) \arrow[r,"\theta_i"] \arrow[d,"\varphi_i"] & C(X_{i+1}) \arrow[r,"\theta_{i+1}"] \arrow[d,"\varphi_{i+1}"] & C(X_{i+2})\arrow[r] \arrow[d,"\varphi_{i+2}"] & \cdots \arrow[r] & C(X) \arrow[d,dashed,"\varphi"]\\
\cdots \arrow[r,equal] & A \arrow[r,equal] & A \arrow[r,equal] & A \arrow[r,equal] & \cdots \arrow[r,equal] & A
\end{tikzcd}
\]
is an approximate intertwining inducing the required unital $^*$-monomorphism $\varphi\colon C(X)\to A$.
\end{proof}

\begin{proposition} \label{uniqueness}
Suppose that either $A\in\mathcal{C}_1$ and $X$ is a compact connected metric space or $A\in\mathcal{C}_2$ and $X$ is a compact connected metric space which is moreover $K$-contractible. Let  $\varphi\colon C(X) \to A$ be a unital \mbox{$^*$-monomorphism}. Then for every $\varepsilon>0$ there exists $\delta>0$ and a finite set of positive contractions $G\subseteq C(X)$ such that the following holds. If $\varphi'\colon C(X)\to A$ is a unital $^*$-monomorphism with $KL(\varphi')=KL(\varphi)$ and $|\tau(\varphi'(g))-\tau(\varphi(g))|<\delta$ for every $g\in G$ and $\tau\in T(A)$, then $d_U(\varphi,\varphi')<\varepsilon.$ That is, for any finite subset $F\subseteq\lip(X)$ there is a unitary $u\in A$ such that \[\|\varphi'(f)-u\varphi(f)u^*\| < \varepsilon\] for every $f\in F$.
\end{proposition}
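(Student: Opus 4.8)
The plan is to derive Proposition~\ref{uniqueness} from the uniqueness theorem of \cite[Theorem 6.6]{Matui:2011uq} (for $A\in\mathcal C_2$) and its real-rank-zero counterpart (for $A\in\mathcal C_1$), packaged in the form described in Remark~\ref{free}. The key point is that those uniqueness theorems are stated as genuine \emph{exact} equality statements---if $KL(\varphi)=KL(\varphi')$, the maps agree on algebraic $K_1$, and $\tau\circ\varphi=\tau\circ\varphi'$ for all $\tau\in T(A)$, then $\varphi\sim_{au}\varphi'$---whereas we need a quantitative, $\delta$--$\varepsilon$ version with the trace condition only required to hold approximately on a finite set. First I would fix $A$, $X$, $\varphi$ and $\varepsilon>0$.

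The main step is a compactness/contradiction argument to upgrade the exact uniqueness theorem to the approximate one. Suppose the statement fails for some $\varepsilon>0$: then for every $n$, taking $\delta=1/n$ and $G=G_n$ an exhausting increasing sequence of finite sets of positive contractions whose union is dense in the positive part of $C(X)$, we obtain unital $^*$-monomorphisms $\varphi_n':C(X)\to A$ with $KL(\varphi_n')=KL(\varphi)$, with $|\tau(\varphi_n'(g))-\tau(\varphi(g))|<1/n$ for all $g\in G_n$ and all $\tau\in T(A)$, but with $d_U(\varphi,\varphi_n')\ge\varepsilon$. The density of $\bigcup_n G_n$ together with the uniform bound forces $\tau\circ\varphi_n'\to\tau\circ\varphi$ uniformly over $\tau\in T(A)$ in the appropriate sense (here one uses that $T(A)$ has finitely many extreme points $\tau_1,\dots,\tau_m$ in the $\mathcal C_1$ case, or is a single point in the $\mathcal C_2$ case, so "for every tracial state" reduces to finitely many conditions and the Choquet structure causes no trouble). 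The crux is then to feed this into the classification machinery: one passes to the $KL$-picture, notes $KL(\varphi_n')=KL(\varphi)$ is a fixed constraint, and uses a reindexing/approximate-intertwining argument---exactly as in the proof of Proposition~\ref{existence}, or directly via the stability built into \cite[\S6]{Matui:2011uq}---to produce, for large $n$, a unitary $u_n$ with $\|\varphi_n'(f)-u_n\varphi(f)u_n^*\|<\varepsilon$ for $f$ in the relevant finite Lipschitz set, contradicting $d_U(\varphi,\varphi_n')\ge\varepsilon$. Regarding algebraic $K_1$: by Remark~\ref{free} this obstruction is automatically absent here, since $A\in\mathcal C_1$ has real rank zero (and in the $\mathcal C_2$ case one arranges, or it follows from $KL$-agreement together with $\mathcal Z$-stability and the relevant determinant-rigidity of \cite{Matui:2011uq}, that the algebraic $K_1$ data matches); so hypothesis (ii) of the uniqueness theorem is free and need not be tracked.

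The step I expect to be the main obstacle is the passage from "$d_U$" as defined here---a supremum over finite subsets $F$ of $\lip(X)$ of an infimum over unitaries---to the finitely-many-generators formulation of the classification uniqueness theorems, which are stated for a fixed finite set $F\subseteq C(X)$ and a single $\varepsilon$. One must check that controlling the images of a suitable finite set of positive contractions $G$ (the test functions appearing in the uniqueness theorem for parameters $\varepsilon$ and the given $F$) to within $\delta$ on traces is enough, and that the unitary $u$ produced works \emph{simultaneously} for all of $F$; this is exactly what \cite[Theorem 6.6]{Matui:2011uq} delivers, but matching the quantifier order between our $d_U$ and their statement requires care. Once that bookkeeping is done---essentially unwinding the definition of $d_U$ and invoking the classification theorem with $(F,\varepsilon)$ to extract $(G,\delta)$---the proposition follows. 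An alternative, cleaner route avoiding the contradiction argument is to quote the quantitative form of the uniqueness theorem directly: \cite[Theorem 6.6]{Matui:2011uq} is already phrased with an $\varepsilon$--$\delta$, finite-set dependence, and one simply has to translate its hypotheses (agreement of $KL$, and closeness of traces on a finite set) into the language of Definition~\ref{metrics}, observing as above that the algebraic $K_1$ condition is vacuous under our standing assumptions.
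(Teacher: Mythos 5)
Your proposal has a genuine gap, and it sits exactly where you yourself locate ``the main obstacle.'' The proposition demands a single pair $(G,\delta)$ chosen \emph{before} the finite set $F$: the same test functions and tolerance must force $d_U(\varphi,\varphi')<\varepsilon$, i.e.\ must work simultaneously for \emph{every} finite $F\subseteq\lip(X)$. Matui's uniqueness theorems (\cite[Theorems 4.7 and 6.6]{Matui:2011uq} -- and note they are already stated in quantitative $\varepsilon$--$\delta$ form, so your opening claim that they are ``exact equality statements'' is not accurate, as your own last sentence concedes) produce $(G,\delta)$ depending on the given finite set $F$. When you write that the bookkeeping amounts to ``invoking the classification theorem with $(F,\varepsilon)$ to extract $(G,\delta)$,'' you have the quantifiers backwards: that yields $G=G(F)$, $\delta=\delta(F)$, which is not what the statement asserts. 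The missing idea -- and it is the entire content of the paper's one-line proof -- is that $\lip(X)$ is \emph{compact} in the uniform norm (equicontinuous and uniformly bounded, by Arzel\`a--Ascoli). One therefore applies Matui's theorem to a fixed finite $\tfrac{\varepsilon}{3}$-net of $\lip(X)$ (or observes, as the paper notes, that in \cite[Lemmas 4.1--4.3]{Matui:2011uq} the choice of $G$ and $\delta$ depends only on the modulus of continuity of the elements of $F$, which is uniform over $\lip(X)$); the resulting single $(G,\delta)$ and single unitary then handle every finite $F\subseteq\lip(X)$ up to $\varepsilon$. Without this uniformity step the proposition simply does not follow from the cited theorems.

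Your primary route, the compactness-of-hypotheses contradiction argument, is also not carried through. Given $\varphi_n'$ with traces within $1/n$ of those of $\varphi$ on an exhausting sequence $G_n$ but $d_U(\varphi,\varphi_n')\ge\varepsilon$, the witnessing finite sets $F_n\subseteq\lip(X)$ vary with $n$, so even the quantitative uniqueness theorem applied to a fixed $F$ gives no contradiction unless one again uses compactness of $\lip(X)$ to replace the $F_n$ by a fixed net; and the step where you ``produce, for large $n$, a unitary $u_n$'' via ``reindexing/approximate intertwining'' is precisely the assertion to be proved, not an argument. The intertwining in Proposition~\ref{existence} is a different device (it builds a limit morphism from exactly matching invariants along an inductive system) and does not substitute for the uniformity you need here. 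In short: cite \cite[Theorems 4.7 and 6.6]{Matui:2011uq} directly, as in your closing remark, but supply the compactness of $\lip(X)$ as the reason a single $(G,\delta)$ suffices; your treatment of the $K_1$/algebraic $K_1$ hypotheses via Remark~\ref{free} is fine and matches the paper's setup.
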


\begin{proof}
This follows from \cite[Theorems 4.7 and 6.6]{Matui:2011uq} since $\lip(X)$ is compact. (Alternatively, one can directly check that in the proofs of the results of \cite{Matui:2011uq} the set $G$ of test functions and the tolerance $\delta>0$ depend only on the modulus of continuity of the elements of $F$---see \cite[Lemmas 4.1--4.3]{Matui:2011uq}.)
\end{proof}

For measures with atoms transport maps may not even exist, optimal or otherwise. In our context we do not lose any generality by excluding these.

\begin{proposition} \label{diffuse}
Suppose that either $A\in\mathcal{C}_1$ and $X$ is a compact path-connected metric space or $A\in\mathcal{C}_2$ and $X$ is a compact connected metric space which is moreover $K$-contractible. Let  $\varphi\colon C(X) \to A$ be a unital \mbox{$^*$-monomorphism} and $\varepsilon>0$. Then, there exists a unital $^*$-monomorphism $\varphi'\colon C(X)\to A$ such that
\begin{enumerate}[label=(\roman*)]
\item $KL(\varphi')=KL(\varphi)$,
\item $d_U(\varphi,\varphi') < \varepsilon$, and
\item $\mu_{(\varphi')^*\tau}\in\mathcal{M}_g(X)$ for every $\tau\in T(A)$.
\end{enumerate}
\end{proposition}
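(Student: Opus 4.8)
The plan is to perturb $\varphi$ so that each of the finitely many extremal traces pushes forward to a fully supported diffuse measure, and then invoke the uniqueness result (Proposition~\ref{uniqueness}) to guarantee that the perturbed map is uniformly close to $\varphi$ in $d_U$ while leaving the $KL$-class unchanged. Recall that $\mathcal{M}_g(X)$ is a dense $G_\delta$ in $\mathcal{M}(X)$, so the measures we seek are abundant; the work lies in realising them by a $^*$-monomorphism with the correct $K$-theory.

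First I would reduce to the extremal traces $\tau_1,\ldots,\tau_m$ (in the $\mathcal{C}_1$ case) or the unique trace $\tau$ (in the $\mathcal{C}_2$ case): since any $\tau\in T(A)$ is a limit of convex combinations of the $\tau_j$, and since a convex combination of fully supported diffuse measures is again fully supported and diffuse, it suffices to arrange $\mu_{(\varphi')^*\tau_j}\in\mathcal{M}_g(X)$ for each $j$. Next, fix $\varepsilon>0$ and let $\delta>0$ and the finite set $G\subseteq C(X)$ of positive contractions be furnished by Proposition~\ref{uniqueness} applied to $\varphi$ and $\varepsilon$. The goal is then to produce $\varphi'$ with $KL(\varphi')=KL(\varphi)$ and $|\tau_j(\varphi'(g))-\tau_j(\varphi(g))|<\delta$ for all $g\in G$, $j$, while $\mu_{(\varphi')^*\tau_j}$ is good; Proposition~\ref{uniqueness} then delivers $d_U(\varphi,\varphi')<\varepsilon$ and conclusions (i), (ii).

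For the construction of $\varphi'$: in the $\mathcal{C}_2$ case, Proposition~\ref{existence} gives a unital $^*$-monomorphism $\varphi_0:C(X)\to A$ with $\varphi_0^*\tau=\tau_\mu$ for any prescribed fully supported $\mu$, in particular for a $\mu\in\mathcal{M}_g(X)$; then form $\varphi'$ as a ``convex combination'' $\varphi'=\psi\circ(\varphi\oplus\varphi_0)$ realised inside $A\cong A\otimes\mathcal{Z}$ (or using a unital embedding of $M_k$-amplifications and approximate divisibility of traces) with weight $1-\eta$ on $\varphi$ and $\eta$ on $\varphi_0$ for $\eta$ small. Such a combination satisfies $KL(\varphi')=KL(\varphi)$ because both summands factor through $C[0,1]$ on $K_1$ and $K_0(C(X))=\mathbb{Z}$ is generated by $[1]$, so $KL$ is unchanged (exactly as in the proof of Proposition~\ref{existence}); it satisfies $\tau(\varphi'(g))=(1-\eta)\tau(\varphi(g))+\eta\tau_\mu(g)$, which is within $\delta$ of $\tau(\varphi(g))$ once $\eta<\delta$; and its pushforward measure is $(1-\eta)\mu_{\varphi^*\tau}+\eta\mu$, which is fully supported and diffuse since $\mu$ is. The $\mathcal{C}_1$ case is handled the same way except that $X$ need not be $K$-contractible: here one chooses $\varphi_0$ to be a suitable finite-dimensional-range or ``point-evaluation-smeared'' map whose existence in real-rank-zero stable-rank-one algebras with the prescribed $K$-theory is standard (using that real rank zero makes algebraic $K_1$ redundant, cf.\ Remark~\ref{free}), and one must check that adding a small multiple of $\varphi_0$ does not alter $KL(\varphi)$—which holds because the perturbation is homotopic through the parameter $\eta\to 0$ to the zero perturbation, so the $KL$-class, being discrete, is constant.

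The main obstacle I expect is the $K$-theoretic bookkeeping in the $\mathcal{C}_1$ case without $K$-contractibility: one must produce the auxiliary diffuse-measure map $\varphi_0$ (or directly perturb $\varphi$) in a way that provably does not disturb $KL(\varphi)$, since $KL$ is more rigid than $K_*$ and $X$ may have torsion-free but nontrivial $K^1$. The clean way around this is to keep the perturbation small and continuous in a homotopy parameter so that invariance of $KL$ under homotopy (it is $[[C(X),A\otimes\mathcal{K}]]$-valued and hence homotopy-invariant) forces $KL(\varphi')=KL(\varphi)$; combined with the trace estimate this is exactly what Proposition~\ref{uniqueness} needs, and (ii) follows. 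I would present the $\mathcal{C}_2$ argument in full and indicate the $\mathcal{C}_1$ modification, noting that real rank zero removes the algebraic-$K_1$ subtlety flagged in Remark~\ref{free}.
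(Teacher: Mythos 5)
There is a genuine gap, and it sits at the heart of your construction: a convex combination $(1-\eta)\mu_{\varphi^*\tau}+\eta\mu$ with $\mu\in\mathcal{M}_g(X)$ is fully supported but is \emph{not} diffuse unless $\mu_{\varphi^*\tau}$ already was. Any atom of $\mu_{\varphi^*\tau}$ of mass $c$ survives as an atom of mass $(1-\eta)c>0$, so condition (iii) fails for your $\varphi'$; mixing in a small diffuse component shrinks atoms but never removes them, and removing them is the entire content of the proposition. The $\mathcal{C}_1$ fallback makes this worse: a ``finite-dimensional-range or point-evaluation-smeared'' $\varphi_0$ has purely atomic pushforwards, so it cannot help with (iii) at all (and such a map is not a monomorphism). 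There is also a secondary problem in the $\mathcal{C}_2$ case: realising a tracially weighted direct sum $\varphi\oplus_\eta\varphi_0$ inside $A\cong A\otimes\mathcal{Z}$ requires a projection of trace $\eta$, and $\mathcal{Z}$-stable algebras in $\mathcal{C}_2$ (e.g.\ $\mathcal{Z}$ itself) may have no nontrivial projections, so the ``convex combination of $^*$-homomorphisms'' is not available as stated; the homotopy argument for $KL$-invariance is then moot.

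The paper's proof avoids perturbing $\varphi$ additively and instead replaces its tracial data outright. Since $\varphi$ is injective and the $\tau_i$ are faithful, each $\mu_{\varphi^*\tau_i}$ is fully supported, and $\mathcal{M}_g(X)$ is weak-$^*$ dense in the faithful measures; so one chooses $\mu_1,\ldots,\mu_m\in\mathcal{M}_g(X)$ with $|\varphi^*\tau_i(g)-\tau_{\mu_i}(g)|<\delta$ for all $g$ in the finite test set $G$ furnished by Proposition~\ref{uniqueness}. One then invokes an \emph{existence} theorem that produces a unital $^*$-monomorphism $\varphi'$ with $KL(\varphi')=KL(\varphi)$ and $(\varphi')^*\tau_i=\tau_{\mu_i}$ exactly: \cite[Theorem 2.6]{Matui:2011uq} (a form of \cite{Ng:2008aa}) in the $\mathcal{C}_1$ case, and Proposition~\ref{existence} in the $\mathcal{C}_2$ case. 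Convexity (finitely many extremal traces, respectively a unique trace) gives (iii) for all $\tau\in T(A)$, and Proposition~\ref{uniqueness} gives $d_U(\varphi,\varphi')<\varepsilon$. Your framing of the reduction to extremal traces and the use of Proposition~\ref{uniqueness} is the right skeleton; what is missing is precisely such an existence result letting you prescribe the diffuse measures (together with the $KL$-class) rather than trying to reach them by small convex perturbations of $\varphi$.
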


\begin{proof}
In either case, let $\tau_1,\ldots,\tau_m$ be the extremal tracial states of $A$. Let $G\subseteq C(X)$ and $\delta>0$ be as in Proposition~\ref{uniqueness}. Since $\mathcal{M}_g(X)$ is weak-$^*$-dense in the set of faithful Borel probability measures on $X$, there exist $\mu_1,\ldots,\mu_m\in\mathcal{M}_g(X)$ such that $|\varphi^*\tau_i(g)-\tau_{\mu_i}(g)|<\delta$ for $i=1,\ldots,m$ and $g\in G$.

By \cite[Theorem 2.6]{Matui:2011uq} (which is a simplification of \cite[Theorem 0.1]{Ng:2008aa}) or Proposition~\ref{existence}, there is a unital $^*$-monomorphism $\varphi'\colon C(X)\to A$ with $KL(\varphi')=KL(\varphi)$ and $(\varphi')^*\tau_i=\tau_{\mu_i}$ for every $i$. By convexity, $\varphi'$ is the required map.
\end{proof}

In the real rank zero setting, we can simultaneously transport finitely many measures by diagonalising. In what follows, $\delta_{ij}$ is Kronecker's function.

\begin{lemma} \label{directsum}
Let $A\in\mathcal{C}_1$ with extremal traces $\tau_1,\ldots,\tau_m$. Then for every $\gamma>0$, there exist orthogonal projections $p_1,\ldots,p_m\in A$ such that
\begin{enumerate}[label=(\roman*)]
\item $p_1+\cdots+p_m=1$ and
\item $|\tau_j(p_i) - \delta_{ij}| < \gamma$ for $1\le i,j\le m$.
\end{enumerate}
\end{lemma}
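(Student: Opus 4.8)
The statement asks to produce, inside $A\in\mathcal C_1$, a system of orthogonal projections $p_1,\dots,p_m$ summing to $1$ with $\tau_i(p_j)$ arbitrarily close to $\delta_{ij}$. The plan is to use the separation of the extremal traces together with the $K_0$-data available in $\mathcal C_1$. First I would recall that since $A$ is simple, separable, exact, unital with real rank zero, stable rank one and weakly unperforated $K_0$, the Cuntz semigroup / $K_0$ behaves well: projections are classified up to Murray--von Neumann equivalence by their tracial values, and strict comparison holds (this is packaged in \cite[Chapter III]{Blackadar:1998qf}; concretely it follows from real rank zero plus weak unperforation, as used throughout \cite{Matui:2011uq}). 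In particular, the map $p\mapsto(\tau_1(p),\dots,\tau_m(p))$ has dense range in the appropriate simplex of $m$-tuples.

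The key step is a separation argument on the trace space. Since $\tau_1,\dots,\tau_m$ are distinct extremal (hence pure) states on $A$, for each $j$ I would choose, using Kadison transitivity or just density of the image of projections, a projection $q_j\in A$ with $\tau_j(q_j)$ close to $1$ and $\tau_i(q_j)$ close to $0$ for $i\neq j$; this is where real rank zero is essential, as it guarantees enough projections (one approximates a continuous function on $T(A)$ that is $1$ near $\tau_j$ and $0$ near the other $\tau_i$ by the rank function of a projection, invoking \cite[Lemma 4.1]{Matui:2011uq}-type density statements). The $q_j$ are not orthogonal and need not sum to $1$, so the second step is an orthogonalisation: working one index at a time, replace $q_2$ by a subprojection of $1-q_1$ that is Murray--von Neumann equivalent to a large subprojection of $q_2$ — possible by stable rank one and strict comparison, since $\tau_i(q_2)<\tau_i(1-q_1)+\text{(small)}$ for all $i$ once the tolerances are chosen correctly. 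Iterating, I obtain orthogonal $p_1,\dots,p_{m-1}$ with $p_1+\cdots+p_{m-1}\le 1$ and $\tau_i(p_j)\approx\delta_{ij}$, and then simply set $p_m=1-(p_1+\cdots+p_{m-1})$; the constraint $\sum_i\tau_i(p_\ell)$-bookkeeping forces $\tau_i(p_m)\approx\delta_{im}$ automatically.

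Finally I would assemble the $\varepsilon$-management: fix $\gamma>0$, pick a tolerance $\eta$ depending on $m$ and $\gamma$, run the above with all approximations within $\eta$, and check the accumulated error after $m-1$ orthogonalisation steps is $<\gamma$. The main obstacle I expect is the orthogonalisation step — ensuring at each stage that a subprojection of $1-(p_1+\cdots+p_{k-1})$ with the prescribed (approximate) tracial values actually exists. This is precisely where one needs strict comparison together with weak unperforation of $K_0$ and stable rank one: one must verify the inequality $\tau_i(\text{target})<\tau_i(1-p_1-\cdots-p_{k-1})$ holds strictly and uniformly in $i$, then invoke comparison to realise the target as an actual subprojection. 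The rest (existence of the initial near-indicator projections, the final bookkeeping) is routine given the hypotheses on $\mathcal C_1$.
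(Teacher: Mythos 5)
Your overall route is the right one, and it is essentially what the paper's one-line proof packages: the cited results of Blackadar--Handelman \cite{Blackadar:1982kq} (summarised in \cite[Theorems 6.9.1--6.9.3]{Blackadar:1998qf}) say that for a simple, unital, exact $\cs$-algebra with real rank zero, stable rank one and weakly unperforated $K_0$, traces determine the order on $K_0$ and the image of $K_0$ (hence, by real rank zero, the set of tracial data of projections) is dense in $\mathrm{Aff}(T(A))$; combined with cancellation this yields the lemma. (One small red herring: Kadison transitivity is irrelevant here, since extremal tracial states are not pure states---their GNS representations are factorial, not irreducible.)

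There is, however, a genuine soft spot at your orthogonalisation step. The inequality you invoke, $\tau_i(q_2)<\tau_i(1-q_1)+(\text{small})$, does not give $q_2\precsim 1-q_1$: comparison requires the strict inequality $\tau(q_2)<\tau(1-q_1)$ for \emph{every} trace $\tau$ (it suffices to check the finitely many extremal ones, since an affine function on the simplex attains its minimum at an extreme point). With your near-indicator choice of the $q_j$ this can simply fail: at $\tau_1$ both $\tau_1(q_2)$ and $\tau_1(1-q_1)$ are errors of the same unspecified small order, and nothing decides which is larger; your closing remark that the strict inequality ``must be verified'' does not address the fact that, for your targets, it may be false. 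The fix is to prescribe targets with deliberate slack in the interior of the simplex, e.g.\ aim for $\tau_i(q_j)\approx\eta$ for $i\neq j$ and $\tau_j(q_j)\approx 1-(m-1)\eta$, approximating these to within an error much smaller than $\eta$ (possible by the density statement above, as there are only finitely many extremal traces); then at each stage the remaining room $\tau_i(1-p_1-\cdots-p_{k-1})$ exceeds the requested value by a definite margin, weak unperforation together with exactness (quasitraces are traces) upgrades the strict tracial inequalities to $K_0$-order, and stable rank one gives an honest subprojection. With that adjustment, and $\eta$ chosen small relative to $\gamma/m$, your argument goes through and recovers the lemma.
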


\begin{proof}
This follows from the results of \cite[\rm{III}]{Blackadar:1982kq} (summarised for example in \cite[Theorems 6.9.1, 6.9.2 and 6.9.3]{Blackadar:1998qf}). Although it will be familiar to experts, we include the details for completeness.

Choose $\varepsilon>0$ such that $\frac{\varepsilon}{1-\varepsilon}+\varepsilon < \gamma$. Let us write $\mathcal{C}_1(m)$ for those elements of $\mathcal{C}_1$ with $m$ extremal traces. For $1\le i\le m$, let $f_i\in\aff(T(A))$ be the function defined by $f_i(\tau_j)=\delta_{ij}$. Let $f_1'\in\aff(T(A))$ be a function whose range is contained in $(\frac{\varepsilon}{2},1-\frac{\varepsilon}{2})$ such that \[\max_{\tau\in T(A)}|f_1'(\tau)-f_1(\tau)|<\frac{\varepsilon}{2}.\] By \cite[Theorem 6.9.3]{Blackadar:1998qf}, there exists $x=[q]-[p]\in K_0(A)$ such that $|\tau_*(x)-f_1'(\tau)|<\frac{\varepsilon}{2}$ for every $\tau\in T(A)$. Here, $p$ and $q$ are projections in some matrix algebra over $A$ and $\tau_*(x):=\tau(q)-\tau(p)$. By choice of $f_1'$, we have $\{\tau_*(x) \mid \tau\in T(A)\} \subseteq (0,1)$. In particular, $\tau(p)<\tau(q)$ for every $\tau\in T(A)$ so by \cite[Corollary 6.9.2]{Blackadar:1998qf} there is a projection $p_1$ over $A$ such that $x=[q]-[p]=[p_1]$. Moreover, since $\tau(p_1)=\tau_*(x)<1=\tau(1)$ for every $\tau\in T(A)$, we may assume $p_1\in A$. Finally, note that

\begin{equation} \label{tracep1}
\max_{1\le j\le m}|\tau_j(p_1) - \delta_{1j}| = \max_{\tau\in T(A)}|\tau(p_1) - f_1(\tau)| < \varepsilon.
\end{equation}

Now we proceed by induction, using the fact that since $A$ is in $\mathcal{C}_1(m)$ then so is $B:=(1-p_1)A(1-p_1)$. In particular, by \cite[Lemma 4.6, Proposition 4.7]{Cuntz:1979fv} the inclusion map $B\hookrightarrow A$ induces an isomorphism of tracial cones; equivalently, every tracial state $\tau\in T(B)$ is the restriction of a unique trace $\tau\in\widetilde{T(A)}=\{\lambda\cdot \sigma \mid \sigma\in T(A), \lambda\in[0,\infty)\}$ with $\tau(1-p_1)=1$.

By induction, there exist projections $p_2,\ldots,p_m\in (1-p_1)A(1-p_1)$ with $p_2+\cdots+p_m=1-p_1$ such that for $2\le i,j\le m$,
\begin{equation} \label{tracep2}
\left|\frac{\tau_j(p_i)}{\tau_j(1-p_1)}-\delta_{ij}\right|<\varepsilon.
\end{equation}
The case $i=1$ of the lemma is covered by (\ref{tracep1}), as is the case $j=1$: for $i>1$, 
\[
\tau_1(p_i) \le \tau_1(1-p_1) = 1-\tau_1(p_1) < \varepsilon <\gamma.
\]
For $2\le i,j\le m$, we have $\tau_j(1-p_1)=1-\tau_j(p_1) > 1-\varepsilon$, hence by (\ref{tracep2}),
\begin{align*}
|\tau_j(p_i)-\delta_{ij}| &< \left|\tau_j(p_i)-\frac{\tau_j(p_i)}{\tau_j(1-p_1)}\right|+\varepsilon\\
&\le \frac{\tau_j(p_1)}{1-\tau_j(p_1)}\|p_i\|+\varepsilon\\
&< \frac{\varepsilon}{1-\varepsilon}+\varepsilon\\
&< \gamma. \qedhere
\end{align*}
\end{proof}

\begin{proposition} \label{diagonal}
Let $A\in\mathcal{C}_1$. Let $X$ be a compact path-connected $K$-planar metric space. Let $\psi\colon C(X)\to A$ be a unital $^*$-monomorphism. Then for every $\varepsilon>0$, there are orthogonal projections $p_1,\ldots,p_m\in A$ with $p_1+\cdots+p_m=1$ and unital $^*$-monomorphisms $\psi_i\colon C(X)\to p_iAp_i$ such that the unital $^*$-monomorphism $\psi'\colon C(X)\to A$ given by the factorisation
\[ \label{commute}
\begin{tikzcd}
C(X) \arrow[rr,dashed,"\psi' "] \arrow[dr,"\bigoplus_{i=1}^m\psi_i"] & & A\\
& \bigoplus_{i=1}^m p_iAp_i \arrow[ur, hook] & 
\end{tikzcd}
\]
satisfies
\begin{enumerate}[label=(\roman*)]
\item $KL(\psi')=KL(\psi)$,
\item $d_U(\psi,\psi') < \varepsilon$, and
\item $|\psi_i^*\tau_i(g)-(\psi')^*\tau_i(g)|<\varepsilon$ for every $i$ and every positive contraction $g\in C(X)$.
\end{enumerate}
\end{proposition}

\begin{proof}
Let $G\subseteq C(X)$ and $\delta\in(0,\varepsilon)$ be as required in Proposition~\ref{uniqueness} for $\varepsilon$ and $\psi$. Choose $\gamma>0$ such that $\gamma + \frac{\gamma}{1-\gamma} < \delta$ and let $p_1,\ldots,p_m\in A$ be as in Lemma~\ref{directsum} for this $\gamma$ and the functions $\delta_{ij}$. Then by \cite[Theorem 2.6]{Matui:2011uq} there are unital $^*$-monomorphisms $\psi_i\colon C(X)\to p_iAp_i$ with:
\begin{enumerate}[label=(\roman*)] 
\item $\psi_i^*\tau = \psi^*\tau_i$ for every $\tau\in T(p_iAp_i)$,
\item $K_1(\psi_1) = K_1(\psi)$ (under the isomorphism $K_1(p_1Ap_1)\cong K_1(A)$ induced by inclusion),
\item $K_1(\psi_i) = 0$ for $i>1$.
\end{enumerate}
Then the $^*$-homomorphism $\psi' = \sum_{i=1}^m \psi_i$ is injective, unital and satisfies $K_1(\psi') = K_1(\psi)$ (by construction) and $K_0(\psi')=K_0(\psi)$ (since both maps are unital). (See Remark~\ref{free}.) This means that $KL(\psi')=KL(\psi)$. Moreover, $\psi'$ and $\psi$ approximately agree on traces: for $i=1,\ldots,m$ and $g\in G$ we have
\begin{align*}
|\tau_i(\psi'(g)) - \tau_i(\psi(g))| & = \left|\tau_i\left(\sum_{j=1}^m\psi_j(g)\right) - \tau_i(\psi(g))\right|\\
&\le \sum_{j\ne i}\tau_i(\psi_j(g))  + |\tau_i(\psi_i(g)) - \tau_i(\psi(g))|\\
&\le \tau_i(1-p_i) + \left|\tau_i(\psi_i(g)) - \frac{\tau_i}{\tau_i(p_i)}(\psi_i(g))\right|\\
&+ \left|\frac{\tau_i}{\tau_i(p_i)}(\psi_i(g)) - \tau_i(\psi(g))\right|\\
&\le \gamma + \left|\frac{1}{\tau_i(p_i)}-1\right| +0\\
&\le \gamma + \frac{\gamma}{1-\gamma}\\
&< \delta.
\end{align*}
The conclusion then follows by choice of $G$ and $\delta$.
\end{proof}

In our first theorem we consider normal elements in $\cs$-algebras with real rank zero that have Peano continuua as spectra.

\begin{theorem} \label{main1}
Let $A\in\mathcal{C}_1$ and let $X\subseteq\mathbb{C}$ be a Peano continuum. Suppose that $x, y \in A$ are normal elements  corresponding to unital $^*$-monomorphisms $\varphi,\psi\colon C(X)\to A$ with $K_1(\varphi)=K_1(\psi)=0$. Then \[d_U(x,y) = \delta(x,y) = d_W(x,y).\]
\end{theorem}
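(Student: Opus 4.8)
The plan is to establish the chain of inequalities $\delta(x,y)\le d_U(x,y)\le d_W(x,y)\le\delta(x,y)$, since the middle term $d_U(x,y)$ is the genuine distance between unitary orbits that we ultimately care about. The third inequality $d_W(x,y)\le\delta(x,y)$ is essentially already available: by Lemma~\ref{ineq}(\ref{connected}) (applicable since $X\subseteq\cc$ is a Peano continuum, hence connected, and $A\in\mathcal{C}_1$ is exact with strict comparison) we in fact have $d_W(\varphi,\psi)=\delta(\varphi,\psi)$, and $\delta(\varphi,\psi)=\delta(x,y)$ by definition, while $d_W(\varphi,\psi)=d_W(x,y)$. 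Likewise the first inequality $\delta(x,y)\le d_U(x,y)$ will follow once we show $d_U(x,y)\ge d_U(\varphi,\psi)$ under our hypotheses, because $\delta(\varphi,\psi)\le d_U(\varphi,\psi)$ needs to be produced; more directly, the middle inequality $d_U(x,y)\le d_W(x,y)$ is the one requiring real work, and once it holds together with $d_W(x,y)=\delta(x,y)$ and the trivial-direction bound $\delta(x,y)\le d_U(x,y)$ from Lemma~\ref{ineq}(\ref{continuous})-type reasoning (or rather from the fact that $\delta$ underestimates $d_U$, which one gets from Proposition~\ref{local} and the von Neumann algebra input), the three quantities are squeezed equal.

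The heart of the matter is therefore: $d_U(x,y)\le\delta(x,y)$. Set $r=\delta(x,y)=\delta(\varphi,\psi)$ and fix $\varepsilon>0$; I want a unitary $u$ with $\|uxu^*-y\|<r+\varepsilon$. First, using Proposition~\ref{diffuse} (the real rank zero / $\mathcal{C}_1$ case, with $X$ a Peano continuum hence path-connected), replace $\varphi$ and $\psi$ by $^*$-monomorphisms $\varphi_0,\psi_0$ that are within $d_U$-distance $\varepsilon/4$ of $\varphi,\psi$ respectively, have the same $KL$-class, trivial $K_1$, and induce fully supported diffuse measures $\mu_{\varphi_0^*\tau_i},\mu_{\psi_0^*\tau_i}\in\mathcal{M}_g(X)$ at each extremal trace $\tau_i$; this moves $d_U(x,y)$ and $\delta(x,y)$ by at most $O(\varepsilon)$ (controlling $\delta$ via Corollary~\ref{continuitytrick}). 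Next, since $X$ is a Peano continuum, for each $i$ there is a continuous surjection $h_i:X\to X$ with $(h_i)_*\mu_{\psi_0^*\tau_i}=\mu_{\varphi_0^*\tau_i}$ (cited after Definition~\ref{maindef}); but we need these to realize the optimal cost $r$ — here is where I would instead pass through the approximate continuous transport results of Section~\ref{measures}: $X$ being a Peano continuum in $\cc$, after a further $d_U$-small perturbation one reduces (via Proposition~\ref{diagonal}, diagonalizing $\psi_0=\bigoplus\psi_i$ over orthogonal projections $p_i$ with $\tau_i(p_j)\approx\delta_{ij}$) to transporting one measure at a time, on a space admitting approximate continuous transport up to a constant. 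The precompactness of $X$ in $\cc$ and path-connectedness let us invoke the relevant transport proposition to get homeomorphisms (or continuous surjections) $h_i^{(n)}$ with $\sup_x d(h_i^{(n)}(x),x)\to r$ and $(h_i^{(n)})_*\mu_{\psi_0^*\tau_i}\to^{w^*}\mu_{\varphi_0^*\tau_i}$.

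Now translate back: each $h_i$ yields $\psi_i':=\psi_i\circ h_i^*:C(X)\to p_iAp_i$, and assembling $\psi'=\bigoplus\psi_i'$ gives a $^*$-monomorphism with $\psi'(\mathrm{id})$ within $r+O(\varepsilon)$ of $\psi(\mathrm{id})=y$ in norm (because $\|\psi_i'(\mathrm{id})-\psi_i(\mathrm{id})\|=\|(\mathrm{id}\circ h_i)-\mathrm{id}\|_{C(X)}=\sup_x|h_i(x)-x|$), with $KL(\psi')=KL(\psi_0)=KL(\varphi_0)$ (trivial $K_1$, unital, $K$-planar—Remark~\ref{free}), and $(\psi')^*\tau_i=\tau_{\mu_{\varphi_0^*\tau_i}}\approx\varphi_0^*\tau_i$ on the test functions $G$ of Proposition~\ref{uniqueness}. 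Applying Proposition~\ref{uniqueness} to $\varphi_0$ and $\psi'$ produces a unitary $w$ with $\|\varphi_0(f)-w\psi'(f)w^*\|<\varepsilon$ for $f$ in a prescribed finite subset of $\lip(X)$, in particular for $f=\mathrm{id}$ (rescaling the metric so $\mathrm{id}$ is $1$-Lipschitz, or absorbing a $\diam(X)$ factor). Chaining the estimates, $\|x-wyw^*\|\le\|x-\varphi_0(\mathrm{id})\|+\|\varphi_0(\mathrm{id})-w\psi'(\mathrm{id})w^*\|+\|w\psi'(\mathrm{id})w^*-w\psi_0(\mathrm{id})w^*\|+\|w\psi_0(\mathrm{id})w^*-wyw^*\|<r+O(\varepsilon)$, so $d_U(x,y)\le r=\delta(x,y)$ as desired; letting $\varepsilon\to0$ finishes.

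The main obstacle is the compatibility of the \emph{approximate} continuous transport (which for a general Peano continuum in $\cc$ — not an interval, arc, or convex body — is not literally covered by Propositions~\ref{transitive}, \ref{circles}, or Theorem~\ref{balls}) with the requirement of hitting the optimal cost exactly; one must argue that a Peano continuum in the plane still admits transport up to cost $\delta$, or else restrict attention to the fact that a continuous surjection suffices here (as the excerpt hints for Theorem~\ref{main1}) and show that such surjections can be taken with displacement controlled by $\delta(\mu,\nu)$ — likely via the finitely-supported bottleneck approximation of Lemma~\ref{ordering} and Proposition~\ref{prop:opensets2} style path arguments carried out inside $X$. The bookkeeping of the several $\varepsilon/4$-perturbations (diffuseness, diagonalization, transport approximation, uniqueness) is routine but must be arranged so that the \emph{same} finite set $F\ni\mathrm{id}$ and tolerance $\delta$ from Proposition~\ref{uniqueness} govern all of them.
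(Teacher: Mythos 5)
Your chain has the two halves swapped relative to what is actually provable, and the half you dismiss as ``trivial'' is the one that needs the real work. The inequality $\delta(x,y)\le d_U(x,y)$ does not follow from Lemma~\ref{ineq}(\ref{continuous})-type reasoning: that lemma only gives $\delta\le c\,d_U$ for a universal constant $c$ (not known to be $1$ for non-commuting normals), and the exact inequality is only available for \emph{commuting} normal elements (Lemma~\ref{ineq}(\ref{commuting})); Proposition~\ref{local} does not upgrade the constant. The paper's proof of Theorem~\ref{main1} is devoted precisely to this direction: after Proposition~\ref{diffuse} and Proposition~\ref{diagonal}, it takes for each extremal trace a continuous surjection $h_i$ with $(h_i)_*\nu_i=\mu_i$ --- which exists for any Peano continuum and requires \emph{no} displacement control --- forms $\varphi'=\bigoplus_i(\psi_i\circ h_i^*)$, and uses Proposition~\ref{uniqueness} to produce $x'=\varphi'(\id)$ that commutes with $y$ and lies within $\varepsilon/2c$ of the unitary orbit of $x$; then $\delta(x,y)\le\delta(x',y)+\tfrac{\varepsilon}{2}\le d_U(x',y)+\tfrac{\varepsilon}{2}\le d_U(x,y)+\varepsilon$ by Lemma~\ref{ineq}(\ref{continuous}) and (\ref{commuting}). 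Your proposal skips this argument entirely, so the inequality $\delta\le d_U$ is simply not established.

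Conversely, the inequality you concentrate on, $d_U(x,y)\le\delta(x,y)$, cannot be reached by your transport route for a general Peano continuum: the results of Section~\ref{measures} cover only intervals, arcs, circles and convex bodies, and a general planar Peano continuum need not admit (even approximate) continuous transport with constant $1$ --- compare Remark~\ref{polygons}, where already two intervals meeting at an angle force a constant strictly larger than $1$. You identify this as the ``main obstacle'' but do not close it, so as written the key step of your argument fails. The paper avoids the issue altogether by quoting \cite[Theorem 3.6]{Hu:2015aa}, proved by finite-spectra approximations in real rank zero algebras; controlled-displacement transport is needed only in Theorem~\ref{main2}, where admitting approximate continuous transport is an explicit hypothesis on $X$. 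In short: the classification construction (with arbitrary measure-pushing surjections and the commuting trick) yields $\delta\le d_U$, Hu--Lin yields $d_U\le\delta$, and Lemma~\ref{ineq}(\ref{connected}) yields $d_W=\delta$; your proposal proves neither of the first two as stated.
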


\begin{proof}
Since $X$ is connected and the hypotheses imply that $A$ has strict comparison of positive elements (see \cite[Corollary 3.10]{Perera:1997zl}), Lemma~\ref{ineq}\ref{connected} gives $\delta(x,y) = d_W(x,y)$. The inequality $d_U(x,y) \le \delta (x,y)$ is then provided by \cite[Theorem 3.6]{Hu:2015aa} (which is proved by finite-spectra approximations).

We show that $\delta(x,y) \le d_U(x,y)$ as follows. By Proposition~\ref{diffuse}, we may assume that the measures $\mu_i:=\mu_{\varphi^*\tau_i}$ and $\nu_i:=\mu_{\psi^*\tau_i}$ are in $\mathcal{M}_g(X)$. Since the spectrum $X$ is a Peano continuum, for each $i\in\{1,\ldots,m\}$ there exists a continuous surjection $h_i\colon X\to X$ such that $(h_i)_*\nu_i=\mu_i$ (see the discussion in Section~\ref{measures}). Let $\varepsilon>0$. Let $G\subseteq C(X)$ and $\delta>0$ be as required in Proposition~\ref{uniqueness} for $\frac{\varepsilon}{2c}$ and $\varphi$ (where $c$ is as in Lemma~\ref{ineq}\ref{continuous}). By Proposition~\ref{diagonal}, we may assume that $\psi$ factorises as
\[
\bigoplus_{i=1}^m\psi_i\colon  C(X) \to \bigoplus_{i=1}^m p_iAp_i \hookrightarrow A
\]
with $|\psi_i^*\tau_i(g)-\psi^*\tau_i(g)|<\frac{\delta}{2}$ for every $i$ and every positive contraction $g\in  C(X)$. Now define the unital $^*$-monomorphism $\varphi'\colon C(X)\to A$ by $\varphi'=\bigoplus_{i=1}^m(\psi_i\circ h_i^*)$. Then $KL(\varphi')=KL(\psi)=KL(\varphi)$ and $|\tau(\varphi'(g)) - \tau(\varphi(g))|<\delta$ for every $\tau\in T(A)$, so by Proposition~\ref{uniqueness}, $d_U(\varphi',\varphi)<\frac{\varepsilon}{2c}$.

Let $x'=\varphi'(\id)$. Then $x'$ commutes with $y$ and by the above conclusion there is a unitary $u\in\mathcal{U}(A)$ such that $\|x'-uxu^*\|<\frac{\varepsilon}{2c}$. By \ref{continuous} and \ref{commuting} of Lemma~\ref{ineq} we therefore have that
\[
\delta(x,y) \le \delta(x',y) + \frac{\varepsilon}{2} \le d_U(x',y) + \frac{\varepsilon}{2} \le d_U(x,y) + \varepsilon.\qedhere
\]
\end{proof}

If we are willing to demand more of the compact metric space $X$ (specifically, continuous transport) then we can refine the inequality $d_U(x,y) \le \delta(x,y)$ to $d_U(\varphi,\psi) \le \delta(\varphi,\psi)$ and in some cases even remove the assumption of real rank zero.

\begin{theorem} \label{main2}
Let $A\in\mathcal{C}_1\cup\mathcal{C}_2$, let $X$ be a compact path-connected $K$-planar metric space that admits approximate continuous transport and let $\varphi,\psi\colon C(X)\to A$ be unital $^*$-monomorphisms. Suppose that either
\begin{enumerate}[label=(\roman*)]
\item \label{rr0} $A\in\mathcal{C}_1$ and $K_1(\varphi) = K_1(\psi) = 0$ or
\item \label{zstable} $A\in\mathcal{C}_2$ and $X$ is $K$-contractible.
\end{enumerate}
Then
\[
d_U(\varphi,\psi) = \delta(\varphi,\psi) = d_W(\varphi,\psi).
\]
\end{theorem}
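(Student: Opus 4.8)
The plan is to prove the three quantities coincide by establishing a cycle of inequalities: $\delta(\varphi,\psi) = d_W(\varphi,\psi)$, $d_U(\varphi,\psi) \le \delta(\varphi,\psi)$, and $\delta(\varphi,\psi) \le d_U(\varphi,\psi)$. The first equality is essentially free: since $X$ is $K$-planar it is in particular connected (path-connected), and in both cases~(\ref{rr0}) and~(\ref{zstable}) the algebra $A$ is simple, separable, unital, exact with strict comparison (for $\mathcal{C}_1$ this follows from weak unperforation together with real rank zero and stable rank one via \cite[Corollary 3.10]{Perera:1997zl}; for $\mathcal{C}_2$ it follows from $\mathcal{Z}$-stability), so Lemma~\ref{ineq}(\ref{connected}) applies directly and gives $d_W(\varphi,\psi) = \delta(\varphi,\psi)$.

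For the inequality $\delta(\varphi,\psi) \le d_U(\varphi,\psi)$, the strategy is to approximate $\varphi$ by a $^*$-monomorphism whose image \emph{commutes} with that of $\psi$ and then invoke Corollary~\ref{commutingtrick} together with the uniform continuity of $\delta$ from Corollary~\ref{continuitytrick}. Concretely: fix $\varepsilon > 0$ and let $c$ be the constant of Lemma~\ref{ineq}(\ref{continuous}), and let $G \subseteq C(X)$ and $\delta_0 > 0$ be as in Proposition~\ref{uniqueness} for tolerance $\tfrac{\varepsilon}{3c}$ and $\varphi$. By Proposition~\ref{diffuse} we may first replace $\varphi$ and $\psi$ by $d_U$-close monomorphisms whose pushforward measures $\mu_i := \mu_{\varphi^*\tau_i}$ and $\nu_i := \mu_{\psi^*\tau_i}$ (for the extremal traces $\tau_1,\dots,\tau_m$) all lie in $\mathcal{M}_g(X)$ --- in case~(\ref{zstable}) there is a unique trace, so this is a single measure pair. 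Because $X$ admits approximate continuous transport, for each $i$ there is a sequence of homeomorphisms $h_i^{(n)} : X \to X$ with $(h_i^{(n)})_* \nu_i \to \mu_i$ weak-$^*$ and $\limsup_n \sup_x d(h_i^{(n)}(x), x) \le \delta(\mu_i,\nu_i) \le \delta(\varphi,\psi)$. In case~(\ref{rr0}) use Proposition~\ref{diagonal} to write $\psi$ (up to $d_U$-perturbation) as $\bigoplus_{i=1}^m \psi_i$ with $\psi_i : C(X) \to p_i A p_i$, and set $\varphi' := \bigoplus_{i=1}^m (\psi_i \circ (h_i^{(n)})^*)$ for $n$ large; in case~(\ref{zstable}) simply set $\varphi' := \psi \circ (h^{(n)})^*$. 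For $n$ large enough, weak-$^*$ convergence forces $|\tau(\varphi'(g)) - \tau(\varphi(g))| < \delta_0$ for all $g \in G$, $\tau \in T(A)$, while $KL(\varphi') = KL(\psi) = KL(\varphi)$ because $X$ is $K$-planar and all the maps are unital with trivial $K_1$ (Remark~\ref{free}). Proposition~\ref{uniqueness} then yields $d_U(\varphi',\varphi) < \tfrac{\varepsilon}{3c}$. Since $\varphi'$ has image commuting with $\psi$, Corollary~\ref{commutingtrick} gives $\delta(\varphi',\psi) \le d_U(\varphi',\psi)$, and combining with Corollary~\ref{continuitytrick} (applied to the $d_U$-closeness of $\varphi'$ and $\varphi$) and Lemma~\ref{ineq}(\ref{continuous}):
\[
\delta(\varphi,\psi) \le \delta(\varphi',\psi) + \tfrac{\varepsilon}{3} \le d_U(\varphi',\psi) + \tfrac{\varepsilon}{3} \le d_U(\varphi,\psi) + \varepsilon.
\]
Letting $\varepsilon \to 0$ gives $\delta(\varphi,\psi) \le d_U(\varphi,\psi)$.

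For the reverse inequality $d_U(\varphi,\psi) \le \delta(\varphi,\psi)$, fix a finite $F \subseteq \lip(X)$ and $\varepsilon > 0$; I want a single unitary $u$ with $\|u\varphi(f)u^* - \psi(f)\| < \delta(\varphi,\psi) + \varepsilon$ for all $f \in F$. The idea is the same construction but now read in the other direction: using approximate continuous transport, pick homeomorphisms $g_i : X \to X$ (one per extremal trace, or a single $g$ in the $\mathcal{C}_2$ case) with $(g_i)_*\nu_i$ close to $\mu_i$ and displacement essentially bounded by $\delta(\varphi,\psi)$, and form $\varphi'' := \psi \circ g^*$ (resp. the diagonal version). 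Then $\varphi''$ is $KL$-equivalent and trace-close to $\varphi$, so Proposition~\ref{uniqueness} produces a unitary $u$ with $\|\varphi''(f) - u\varphi(f)u^*\|$ small for $f \in F$; meanwhile $\varphi''(f) = \psi(g^*f) = \psi(f \circ g)$, and since each $f$ is $1$-Lipschitz and $g$ moves points by at most $\delta(\varphi,\psi) + \varepsilon$, we get $\|\psi(f \circ g) - \psi(f)\| = \|f \circ g - f\|_\infty \le \delta(\varphi,\psi) + \varepsilon$. The triangle inequality then bounds $\|u\varphi(f)u^* - \psi(f)\|$ by $\delta(\varphi,\psi) + 2\varepsilon$ for all $f \in F$. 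Taking the supremum over $F$ and infimum over $u$ yields $d_U(\varphi,\psi) \le \delta(\varphi,\psi)$.

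The main obstacle is the bookkeeping in case~(\ref{rr0}) when there are several extremal traces: one has to transport all $m$ spectral measures simultaneously while keeping the resulting diagonal map both $KL$-equivalent to the original and uniformly displacement-controlled, which is exactly what Proposition~\ref{diagonal} (diagonalisation into corners $p_i A p_i$) and the $K$-planarity hypothesis (to make $KL$ reduce to unital $K_0$ plus trivial $K_1$) are engineered to handle. A secondary subtlety is that approximate continuous transport only gives homeomorphisms in the \emph{limit}, so one must interleave the "$n$ large" choices with the finite data $(F, G, \delta_0, \varepsilon)$ from Proposition~\ref{uniqueness}; since that data is fixed before choosing $n$, and weak-$^*$ convergence of the pushforwards suffices for the trace estimates while the displacement bound is only needed in the limsup, this causes no circularity.
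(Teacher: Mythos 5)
Your proposal is correct and follows essentially the same route as the paper: reduce to diffuse spectral measures (Proposition~\ref{diffuse}), diagonalise $\psi$ over the finitely many extremal traces (Proposition~\ref{diagonal}), push the transported homeomorphisms through $\psi$ to build a commuting comparison map, and combine Proposition~\ref{uniqueness} with Corollaries~\ref{continuitytrick} and~\ref{commutingtrick} and Lemma~\ref{ineq}. The only difference is cosmetic: the paper builds the transported map $\varphi'$ once and extracts both inequalities from a single chain of estimates, whereas you run the same construction separately for each direction.
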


\begin{proof}
The argument is the same as in Theorem~\ref{main1}, except that here both inequalities $d_U\le\delta$ and $\delta\le d_U$ will be witnessed by the transport maps.

As before, by connectedness of $X$ and strict comparison in $A$ (which in case \ref{zstable} is provided by \cite[Corollary 4.6]{Rordam:2004kq}) we automatically have $d_W(\varphi,\psi)= \delta(\varphi,\psi)$. We also assume that the measures $\mu_i:=\mu_{\varphi^*\tau_i}$ and $\nu_i:=\mu_{\psi^*\tau_i}$ are in $\mathcal{M}_g(X)$.

Let $\varepsilon>0$ and let $F$ be a finite subset of $\lip(X)$. Choose (by Corollary~\ref{continuitytrick}) $\gamma\in(0,\frac{\varepsilon}{2})$ such that $\delta(\varphi',\varphi) <\frac{\varepsilon}{2}$ whenever $d_U(\varphi',\varphi)<\gamma$ (in fact, $\gamma=\frac{\varepsilon}{2c}$ will do). Let $G\subseteq C(X)$ and $\delta>0$ be as required in Proposition~\ref{uniqueness} for $\gamma$ and $\varphi$. By assumption there exist homeomorphisms $h_1,\ldots,h_m\colon X\to X$ such that
\begin{equation}\label{equation:transport}
\sup_{x\in X}d(h_i(x),x) \le \delta(\mu_i,\nu_i)+\frac{\varepsilon}{2}
\end{equation}
and
\[
\left|\int_Xgd(h_i)_*\nu_i-\int_Xgd\mu_i\right| < \frac{\delta}{3}
\]
for every $g\in G$ and $i=1,\ldots,m$. By Proposition~\ref{diagonal} we may assume that $\psi$ factorises as
\[
\bigoplus_{i=1}^m\psi_i\colon  C(X) \to \bigoplus_{i=1}^m p_iAp_i \hookrightarrow A
\]
with $|\psi_i^*\tau_i(g)-\psi^*\tau_i(g)|<\frac{\delta}{3}$ for every $i$ and every positive contraction $g\in C(X)$. Now define the unital $^*$-monomorphism $\varphi'\colon C(X)\to A$ by $\varphi'=\bigoplus_{i=1}^m(\psi_i\circ h_i^*)$. Then $KL(\varphi')=KL(\psi)=KL(\varphi)$ and $\varphi'$ and $\varphi$ tracially agree within $\delta$ on each element of $G$: for every $i$ and $g\in G$ we have
\begin{align*}
|\tau_i(\varphi(g)) - \tau_i(\varphi'(g))| &= \left|\int_Xgd\mu_i - \sum_{j=1}^m\int_X(h_j)^*gd\mu_{\psi_j^*\tau_i}\right| \\
&\le \left|\int_Xgd\mu_i - \int_X(h_i)^*gd\mu_{\psi_i^*\tau_i}\right| + \left|\sum_{j\ne i}\int_X(h_j)^*gd\mu_{\psi_j^*\tau_i}\right|\\
&\le \left|\int_Xgd\mu_i - \int_X(h_i)^*gd\mu_{\psi^*\tau_i}\right| + \frac{\delta}{3} + \tau_i(1-p_i)\\
&<\frac{\delta}{3}+\frac{\delta}{3}+\frac{\delta}{3}\\
&=\delta.
\end{align*}
Therefore $d_U(\varphi',\varphi)<\gamma<\frac{\varepsilon}{2}.$ 
Note moreover that $\varphi'$ and $\psi$ have commuting images. Then:
\begin{align*}
d_U(\varphi,\psi) &\le d_U(\varphi',\psi) + \frac{\varepsilon}{2}\\
&=\sup_{F\subset_{fin}\lip(X)}\inf_{u\in\mathcal{U}(A)}\sup_{f\in F} \|u\varphi'(f)u^*-\psi(f)\| + \frac{\varepsilon}{2}\\
&\le \sup_{f\in\lip(X)}\|\varphi'(f)-\psi(f)\| + \frac{\varepsilon}{2}\\
&= \sup_{f\in\lip(X)}\left\|\bigoplus_{i=1}^m(\psi_i\circ h_i^*)(f) - \bigoplus_{i=1}^m\psi_i(f)\right\| + \frac{\varepsilon}{2}\\
&= \sup_{f\in\lip(X)}\max_{1\le i\le m} \|f\circ h_i-f\| + \frac{\varepsilon}{2}\\
&\le \max_{1\le i\le m} \delta(\mu_i,\nu_i) + \varepsilon \qquad \textrm{(by \eqref{equation:transport} and since } f \textrm{ is Lipschitz})\\
&= \delta(\varphi,\psi) + \varepsilon\\
&\le \delta(\varphi',\psi) + \frac{3\varepsilon}{2}\\
&\le d_U(\varphi',\psi) + \frac{3\varepsilon}{2} \qquad \textrm{(by Corollary~\ref{commutingtrick})}\\
&\le d_U(\varphi,\psi) + 2\varepsilon.
\end{align*}
It follows that $d_U(\varphi,\psi) = \delta(\varphi,\psi)$.
\end{proof}

\begin{corollary} \label{unitaries}
Let $A$, $X$, $\varphi$ and $\psi$ be as in Theorem~\ref{main2}. Suppose further that $X\subseteq\mathbb{C}$, so that $\varphi$ and $\psi$ correspond to normal elements $x,y\in A$. Then
\[
d_U(x,y)=\delta(x,y)=d_W(x,y).
\]
\end{corollary}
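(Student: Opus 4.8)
The plan is to read the corollary off from Theorem~\ref{main2}, using the observation that for $X\subseteq\cc$ the three invariants $d_U(x,y),\delta(x,y),d_W(x,y)$ attached to normal elements are---after an affine rescaling that places the coordinate function $\id$ inside $\lip(X)$---pinched between the corresponding invariants of $\varphi$ and $\psi$.

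First I would dispatch two of the three equalities essentially for free. Because $X\subseteq\cc$, the notational conventions following Definition~\ref{metrics} identify $\delta(x,y)$ with $\delta(\varphi,\psi)$ and $d_W(x,y)$ with $d_W(\varphi,\psi)$, so Theorem~\ref{main2} yields $\delta(x,y)=d_W(x,y)$ at once and only $d_U(x,y)=\delta(x,y)$ remains. For this I would first replace $X$ by $aX+b$ for suitable $a,b\in\cc$, $a\neq 0$: the quantities $d_U$, $\delta$, $d_W$ for normal elements all rescale by $|a|$ under $x\mapsto ax+b$, and the hypotheses of Theorem~\ref{main2}---path-connectedness, $K$-planarity, and approximate continuous transport (whose defining inequality is homogeneous in the metric)---are unaffected, so there is no loss in assuming $X\subseteq\{z\mid |z|\le 1\}$. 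Then $\id\in\lip(X)$, and evaluating the definition of $d_U(\varphi,\psi)$ on the singleton $F=\{\id\}$ gives
\[
d_U(x,y)=\inf_{u\in\mathcal{U}(A)}\|uxu^{*}-y\| \;\le\; d_U(\varphi,\psi)\;=\;\delta(\varphi,\psi)\;=\;\delta(x,y).
\]

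The reverse inequality $\delta(x,y)\le d_U(x,y)$ is the substantive point, and here I would re-run the construction inside the proof of Theorem~\ref{main2}. Fix $\varepsilon>0$. Exactly as there---pass to diffuse measures via Proposition~\ref{diffuse}, diagonalise via Proposition~\ref{diagonal} (vacuously when $A\in\mathcal{C}_2$), then apply the transport homeomorphisms $h_1,\dots,h_m$---one produces unital $^*$-monomorphisms $\varphi',\psi':C(X)\to A$ with commuting images such that $d_U(\varphi,\varphi')<\varepsilon$ and $d_U(\psi,\psi')<\varepsilon$. Setting $x'=\varphi'(\id)$, $y'=\psi'(\id)$, these are commuting normal elements, and feeding $F=\{\id\}$ into the two $d_U$-estimates gives $d_U(x,x')<\varepsilon$ and $d_U(y,y')<\varepsilon$. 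Then Lemma~\ref{ineq}(\ref{commuting}) gives $\delta(x',y')\le d_U(x',y')\le d_U(x,y)+2\varepsilon$, while Lemma~\ref{ineq}(\ref{continuous}) gives $\delta(x,x')\le c\varepsilon$ and $\delta(y,y')\le c\varepsilon$; since $\delta$ obeys the triangle inequality (being a supremum over $\tau$ of L\'evy--Prokhorov-type metrics of spectral measures), I conclude $\delta(x,y)\le\delta(x,x')+\delta(x',y')+\delta(y',y)\le d_U(x,y)+(2c+2)\varepsilon$, and letting $\varepsilon\to 0$ finishes the proof.

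The hard part is really just the bookkeeping in the previous paragraph: one must check that the auxiliary map $\varphi'$ appearing in the proof of Theorem~\ref{main2} can be taken $d_U$-close to $\varphi$ while having image commuting with a $d_U$-small perturbation $\psi'$ of $\psi$. But this is precisely what that proof already delivers, since $\varphi'=\bigoplus_i(\psi_i\circ h_i^{*})$ has the same (commutative) image as the diagonalised $\psi'=\bigoplus_i\psi_i$---precomposition with a homeomorphism does not change the image of $\psi_i$---so beyond invoking Theorem~\ref{main2}, Lemma~\ref{ineq}, and the elementary remark that $\id\in\lip(X)$ after rescaling, nothing new is needed.
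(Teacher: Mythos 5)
Your proposal is correct and follows the argument the paper intends: the identifications $\delta(x,y)=\delta(\varphi,\psi)$ and $d_W(x,y)=d_W(\varphi,\psi)$ plus testing $F=\{\id\}$ (after rescaling so $\id\in\lip(X)$) give $d_U(x,y)\le\delta(x,y)$ directly from Theorem~\ref{main2}, while the reverse inequality is obtained, exactly as in the paper's proof of Theorem~\ref{main1} and inside the proof of Theorem~\ref{main2}, by perturbing to the commuting pair $\varphi'=\bigoplus_i(\psi_i\circ h_i^*)$, $\psi'=\bigoplus_i\psi_i$ and invoking Lemma~\ref{ineq}(\ref{continuous})--(\ref{commuting}) with the triangle inequality for $\delta$ and $d_U$. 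The paper leaves this corollary without an explicit proof, and your write-up supplies precisely the intended bookkeeping, so no further comment is needed.
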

(This in particular applies to certain self-adjoint and unitary elements with connected spectra.)

\begin{remark} \label{spooky}
The starting point of this article was Weyl's problem, and our goal has been to identify connected spectra that permit an exact solution. At the cost of assuming finitely many extremal traces, Theorem~\ref{main2} extends the one-dimensional examples of \cite[Remark
8.6]{Hu:2015aa} to cover all connected, locally connected spectra. However, the theory of continuous transport that we have developed provides a framework for moving beyond the plane and perhaps even beyond commutative domains. It also gives a clue about the right sort of generalised result: if not an exact equality of the distances $d_U$ and $\delta$ then an inequality $d_U \le c_X\cdot \delta$ for a constant $c_X$  computed in terms of the geometry of the underlying space (along the lines of Remark~\ref{polygons}).
\end{remark}

\begin{remark} \label{nonnuclear}
Among simple, separable, unital, exact $\cs$-algebras of real rank zero and with finitely many extremal tracial states, Theorem~\ref{main1} and Theorem~\ref{main2}\ref{rr0} apply, for example, to the $\mathcal{Z}$-stable ones. (Stable rank one and weakly unperforated $K_0$ are automatic for these algebras.) These include for example AF-algebras and irrational rotation algebras.

In addition, some non-$\mathcal{Z}$-stable $\cs$-algebras are included as well. For example, the reduced group $\cs$-algebra $A$ of the group $G=\ast_{n=2}^\infty \mathbb{Z}/n\mathbb{Z}$ satisfies all the required properties---see \cite[Example 2.4]{Dykema:2000aa}. Note that $A$ is not $\mathcal{Z}$-stable for the same reason that it is not approximately divisible: the associated group von Neumann algebra does not have property $\Gamma$. Note also that, since $G$ is not amenable, $A$ is nonnuclear. Moreover, by Rosenberg's Theorem \cite[Theorem A1]{Hadwin:1987aa} $A$ is not quasidiagonal, so by \cite[Theorem 3.4]{Lin:2001ab} $A$ does not have tracial rank zero. This $\cs$-algebra therefore falls outside the remit of \cite[Theorem 8.5, Remark 8.6]{Hu:2015aa}.
\end{remark}

We end the paper by showing that the assumption of separability can be dropped in part of Corollary~\ref{unitaries}. In particular we have the following extension, which applies for example to the nonseparable algebras considered in \cite{Farah:2015aa}.

\begin{theorem} \label{nonseparable}
Let $A$ be a simple, unital, exact, $\mathcal{Z}$-stable $\cs$-algebra with a unique tracial state $\tau$. Let $X$ be a compact connected $K$-contractible metric space that admits approximate continuous transport. Then, for unital $^*$-monomorphisms $\varphi,\psi\colon C(X)\to A$, we have
\[
d_U(\varphi,\psi)=\delta(\varphi,\psi)=d_W(\varphi,\psi).
\]
Suppose further that $X\subseteq\mathbb{C}$, so that $\varphi$ and $\psi$ correspond to normal elements $x,y\in A$. Then
\[
d_U(x,y)=\delta(x,y)=d_W(x,y).
\]
\end{theorem}
\begin{proof}
We will require a small amount of continuous model theory applied to $\cs$-algebras. For a reference, see~\cite{ModelTheory}.

First, notice that $d^A_U(\varphi,\psi)\leq d^B_U(\varphi,\psi)$ and similarly $d_W^A(\varphi,\psi)\leq d^B_W(\varphi,\psi)$ whenever $B\subseteq A$ is a unital $\cs$-subalgebra containing the images of both $\varphi$ and $\psi$, where $d_U^A$ denotes the distance $d_U$ when calculated in $A$. (This is because $B$ has fewer elements, and in particular unitaries, than $A$). Moreover, if $B$ is monotracial, we have that $\delta^A(\varphi,\psi)=\delta^B(\varphi,\psi)$, since the unique trace on $B$ coincides with the restriction of the unique trace $\tau$ on $A$ to $B$, and the value $\delta$ only depends on the measures $\mu_{\varphi^*\tau},\mu_{\psi^*\tau}\in\mathcal{M}(X)$. On the other hand, since $X$ is metrisable and in particular second countable, and $\lip(X)$ is separable, given $B$ as above one can always find, by adjoining the appropriate elements and working with a base for $X$, a separable unital $\cs$-algebra $C$ with $B\subseteq C\subseteq A$ and such that 
 \[
 d^C_U(\varphi,\psi)=d^A_U(\varphi,\psi)\text{ and }d^C_W(\varphi,\psi)=d^A_W(\varphi,\psi).
 \]
Fix such a $C$. By using an elementary submodel argument, (see~\cite[Theorem 2.6.2]{ModelTheory}), since the class of unital, unique-trace $\mathcal Z$-stable algebras is axiomatisable (\cite[Theorem 3.5.5]{ModelTheory}), we can find a separable simple monotracial $\mathcal Z$-stable $D$ such that $C\subseteq D\subseteq A$. (This can also be done  using Blackadar's methods for finding separable simple subalgebras inside nonseparable simple algebras, see \cite[Proposition 2.2]{Blackadar1978}.) Applying Theorem~\ref{main2} to $X$ and $D$, we have that
 \[
d^D_U(\varphi,\psi)=\delta^D(\varphi,\psi)=d^D_W(\varphi,\psi).
 \]
On the other hand, by the above, we have that $d^A_U(\varphi,\psi)=d^D_U(\varphi,\psi)$, $\delta^A(\varphi,\psi)=\delta^D(\varphi,\psi)$ and $d^A_W(\varphi,\psi)=d^D_W(\varphi,\psi)$, and therefore the thesis.
\end{proof}


\end{document}